\newtheorem{mtheorem}{Theorem}
\newtheorem{theorem}{\textbf{Theorem}}[section]
\newtheorem{claim}[theorem]{\textbf{Claim}}
\newtheorem{lemma}[theorem]{\textbf{Lemma}}
\newtheorem{proposition}[theorem]{\textbf{Proposition}}
\newtheorem{remark}[theorem]{\textbf{Remark}}
\newtheorem*{question*}{\textbf{Question}}
\theoremstyle{plain}
\newcommand{\eqdef}{\stackrel{\scriptscriptstyle\rm def}{=}}
\newtheoremstyle{example}% name
  {}% space above
  {}% space below
  {}% body font
  {}% indent amount
  {\itshape}% theorem head font
  {.}% punctuation after theorem head
  {.5em}% space after theorem head
  {\thmname{#1}\thmnumber{ #2}\thmnote{ (#3)}}% theorem head spec
\theoremstyle{example}
\newtheorem{example}{Example}[section]
\numberwithin{equation}{section}
\begin{document}

\title{Markovian random iterations of maps}

\author[E. Matias]{Edgar Matias}
\address{Departamento de Matem\' atica, Universidade Federal da Bahia, Av. Adhemar de Barros s/n, 40170-110 Salvador, Brazil}
\email{edgar.matias@ufba.br}

\begin{abstract}
In this paper, we study Markovian random iterations of maps on standard measurable spaces. We establish a one-to-one correspondence between stationary measures and a certain class of invariant measures of a Markovian random iteration, extending a similar classical result of independent and identically distributed random iterations. As an application, we prove a local synchronization property for Markovian random iterations of homeomorphisms of the circle $S^{1}$.

\end{abstract}
\begin{thanks}{
This work was partially completed while the author
was supported by postdoctoral fellowships at Federal University of Rio de Janeiro and  ICMC-USP.
The author thanks to CAPES and Serrapilheira for the financial support.  The author warmly thanks Anna Zdunik and Tiago Pereira for their useful comments. 
The author is also very grateful to Katrin Gelfert and Tiago Pereira for the generous hospitality.
}\end{thanks}
\keywords{Random iteration of maps, synchronization, stationary measure, skew product, invariant measure. }
\subjclass[2010]{60J05,  37C40, 60G10.}

\maketitle

\section{Introduction}

Let $X=\{X_{n}\}$ be a homogeneous Markov chain moving through a measurable space $E$ and 
consider a family $\{f_{\alpha}\}_{\alpha\in E}$  of homeomorphisms of the circle $S^{1}$.  These two ingredients specify, for every $x$, 
a \emph{Markovian random iteration} given by 
$$
f_{X}^{n}(x)= f_{X_{n-1}}\circ \dots\circ f_{X_{0}}(x).
$$
For an independent and identically distributed (i.i.d.) sequence $\{X_{n}\}$, it was 
proved by Malicet in \cite{Malicet} that if the maps $f_{\alpha}$ do not have an invariant measure 
in common, then there is $\rho<1$ such that  for every $x\in S^{1}$, with probability 1, there is an interval $I$ containing $x$ such that 
\begin{equation}\label{localsync}
\mbox{diam}\, f_{X}^{n}(I)\leq \rho^{n} \quad \mbox{for every }\quad n\geq 0.
\end{equation}

In this paper, we extend this result to the case where $\{X_{n}\}$ is a Markov chain.
We will obtain this generalization as an application of a general result on Markovian random iterations of maps on standard measurable spaces (see Theorem \ref{one} below)  relating  \emph{stationary measures} and \emph{invariant measures}, 
which we start to describe now.

Now, we let $\{f_{\alpha}\}_{E}$ denote a family of measurable transformations of a standard measurable space $M$. A classical approach in the study of a random iteration is to consider a  dynamical system, the so-called \emph{skew product}, whose dynamical behavior is closely related to the random iteration. This allows to apply several results from the deterministic theory 
to the setting of random iterations. To this end, the Markov chain $X$ is taken to be in its canonical form, that is, $X_{n}$ is the natural projection on the product space $\Sigma=E^\mathbb{N}$, given by $X_{n}(\omega)=\omega_{n}$. Then, we consider the induced  \emph{skew product} defined by
$$
F(\omega,x)=(\sigma(\omega),f_{\omega_{0}}(x)),
$$
where $\sigma$ is the shift map on $\Sigma$. This map plays a key role in the study of  random iterations.

In the i.i.d. case the sequence of random variables  $f_{X}^{n}(x)$ is a homogeneous Markov chain with a well-defined transition probability (see Section \ref{Essephi}). Thus, for an 
i.i.d. random iteration, there are two classes of ``invariant measures'' that we can consider: the \emph{stationary measures} of this Markov chain and the \emph{invariant measures} of the induced skew product. An important fact about these two sets of probability measures is that there is a  one-to-one correspondence
between stationary measures and a certain class of invariant measures of the induced skew product. More precisely, a probability measure $\nu$ on $M$ is a stationary measure if and only if the product measure $\mathbb{P}\times \nu$ is invariant for the skew product,
 where $\mathbb{P}$ is the probability measure considered on the space $\Sigma$ for which $\{X_{n}\}$ is an i.i.d.  sequence.

There are various situations in the study of i.i.d. random iterations where 
this correspondence is invoked and explored in both directions, that is, 
studying  random iterations using results on skew products as well as
studying skew products using results from the general theory of Markov chains. A good 
example of this is the main theorem of \cite{Malicet}.
Therein, using the correspondence between stationary measures and invariant product measures, Malicet combined results on skew products (the invariance principle) and a kind of Krylov-Bogolyubov theorem for general Markov chains to prove the surprising \emph{local synchronization} property mentioned above in \eqref{localsync}. 

It is worth mentioning that this correspondence has found several applications to problems of a different nature. To mention some of them: 
in the theory of random matrices (e.g. the invariance principle in the i.i.d. case \cite{Led}
and the continuity of Lyapunov exponents of two-dimensional matrices \cite{Bocker}),  
in the theory of random pertubations \cite{VitorAraujo, Araujo,Hale} 
and also in the study of Poincar\' e recurrence theorems for random dynamical systems \cite{Rousseau}.

A natural question then would be whether there is such a correspondence in the Markovian case. The first obstacle we encounter is that, for Markovian random iterations, the sequence $f_{X}^{n}(x)$ is no longer a Markov chain. However, it turns out that 
the sequence of random variables
\begin{equation}\label{themarkovchain}
Z_{n}=(X_{n-1},f_{X}^{n}(x))
\end{equation}
 is a homogeneous Markov chain. Thus we can consider stationary measures related to 
a Markovian random iteration and ask whether there is  a set of invariant measures of the skew product that is in one-to-one correspondence to this set of stationary measures. In Theorem \ref{one}, we consider a Markovian random iteration of maps on a standard measurable space, and we establish a one-to-one correspondence  between stationary measures
 of  the Markov chain $Z_{n}$ and a certain class of invariant measures of the skew product (see Section \ref{Essephi} for the precise definition of this class). This bijection is given explicitly  using the disintegration of probability measures.
 Our result  generalizes  the classical correspondence of the i.i.d. setting,
providing a tool that could be useful to several branches of random dynamical systems.

 Moreover, under some additional assumptions, we show that a  stationary measure is ergodic if and only if its image under this bijection is an ergodic invariant measure, see Theorem \ref{ergodic}.
 In the i.i.d. case, this result was first proved by Kakutani \cite{Kakutani} 
under the hypothesis that the maps have a common fixed point. 
This hypothesis was removed by Ohno \cite{Ohno}. 
Let us observe that
for Markovian random iterations
 of finitely many maps, Markov chains as in \eqref{themarkovchain} were considered already in \cite{Recurrent, DiazMatias,  Edalat, Volk}. Therein conditions are provided ensuring the uniqueness or finiteness of ergodic stationary measures.

 Finally, as an application of the obtained results, we prove a local synchronization  property as in \eqref{localsync}
 for Markovian random iterations of homeomorphisms of $S^{1}$, see Theorem \ref{teocontraction}. This local synchronization  has several consequences in the study of the topological and the ergodic behavior of a random iteration. In the i.i.d. case for example,  in \cite{Malicet},  Malicet obtains the Antonov Theorem \cite{Antonov, KlepAntonov}  in a non-minimal context. Another example is  the central limit theorem for random iterations of finitely many homeomorphisms of  $S^{1}$, obtained in \cite{Annacentral}.  In a forthcoming paper, we will explore applications of the local
 synchronization property obtained in Theorem \ref{teocontraction}  for Markovian random iterations.

\subsection*{Organization of the paper}

In Section \ref{mainn} we state precisely the main definitions and results of this work. In Section \ref{sectiontone} we characterize stationary measures and invariant measures using the disintegration of measures, and we prove Theorem \ref{one}. Theorem \ref{ergodic}  is proved in Section \ref{sectionergodic}. In Section \ref{sectionlocsinc} we apply Theorem \ref{one} to deduce Theorem \ref{teocontraction}. 

\section{Statement of results}\label{mainn}

\subsection{Markovian random iterations}\label{prelimi}
Let $(E,\mathscr{E})$ be a standard measurable space
 and consider a transition probability 
 $p\colon E\times \mathscr{E}\to [0,1]$, that is,  
 for every $\alpha\in E$ the mapping $A\mapsto p(\alpha,A)$ is a probability measure on $E$ and 
 for every $A\in \mathscr{E}$ the mapping $\alpha \mapsto p(\alpha,A)$ is measurable with respect
  to $\mathscr{E}$. 

Let $\Sigma$ be the product space  $E^{\mathbb{N}}$ endowed with the product $\sigma$-algebra $\mathscr{F}=\mathscr{E}^{\mathbb{N}}$ and $m$ be a probability measure on $E$. Associated with the pair $(p,m)$  there is a unique probability measure $\mathbb{P}$ on $\Sigma$, called \emph{Markov measure}, for which the sequence of natural projections $X_{n}(\omega)=\omega_{n}$ is a stationary Markov 
chain on the probability space $(\Sigma, \mathscr{F}, \mathbb{P})$ with transition  probability $p$ and starting measure $m$, see \cite{Revuz}.
 Recall that a probability 
 measure $m$ on $E$ is called a \emph{stationary measure} of the transition probability  $p$ if 
 $$
 m(A)=\int p(\alpha,A)\, dm(\alpha),
 \quad
 \mbox{for every $A\in \mathscr{E}$.}
 $$
Throughout this paper, we assume that the transition probability $p$ has a unique stationary measure $m$, and $\mathbb{P}$ stands for the Markov measure associated with $(p,m)$.

Let $(M, \mathscr{B})$ be a standard measurable space.
 Let $f\colon E\times M\to M$ be a measurable map and denote by $f_{\alpha}$ the map 
 $f_{\alpha}(x)=f(\alpha,x)$. Then,
the map
 $\varphi\colon \mathbb{N}\times \Sigma\times M\to M$ defined by 
$$ 
\varphi(n,\omega,x)\eqdef f_{\omega_{n-1}}\circ\dots \circ f_{\omega_{0}}(x)\eqdef f_{\omega}^{n}(x)
$$
is called a \emph{Markovian random iteration of maps}. The term Markovian means that in the study of this map under a probabilistic point of view, we consider 
the space $\Sigma$ endowed with a Markov measure. We often refer to $\varphi$ as a Markovian random iteration of maps associated with the pair $(p,m)$ and the map $f$.

For every $x\in M$,
 the sequence of random variables 
$Z_{n}(\omega)=(\omega_{n-1}, f_{\omega}^{n}(x))$ is 
a Markov chain with range on the space $E\times M$  with transition probability given by 
\begin{equation}\label{transition}
 \hat p((\alpha,x),B)\eqdef \int \mathds{1}_{B}(\beta, f_{\beta}(x))\,p(\alpha,d\beta)
 \qquad\mbox{for every}\,\,\,\, B\in \mathscr{E}\otimes \mathscr{B},
\end{equation}
where $\mathds{1}_{B}$ indicates the characteristic function of $B$. A stationary measure 
for this  transition probability is called a \emph{stationary measure for the Markovian random iteration} 
$\varphi$ and the set of stationary measures of $\varphi$ is denoted by $\mathcal{S}(\varphi)$.

The \emph{skew product} induced by $\varphi$ is the map 
 $F\colon \Sigma\times M\to \Sigma\times M$   defined by 
\begin{equation}\label{defiskewproduct}
F(\omega,x)\eqdef(\sigma(\omega),f_{\omega_{0}}(x)),
\end{equation}
where $\sigma$ is the shift map on $\Sigma$. We also say that $F$ is the skew product induced by the map $f$.
We say that a probability measure $\hat \mu$ is $F$-\emph{invariant} if $F_{*}\hat\mu=\hat\mu$, where $*$ denotes the push-forward of a measure.

\subsection{Correspondence between stationary measures and invariant measures}\label{Essephi}
Let $(Y, \mathscr{Y})$ be a measurable space and consider a probability measure $\eta$ on the product space $Y\times M$. Let $\Pi_{1}\colon Y\times M\to Y$ be the natural projection on the first factor given by $\Pi_{1}(y,x)=y$. Then there is a unique (a.e.) family of probability measures $\{\eta_{y}\colon y\in Y\}$ on the space $M$ such that for every measurable rectangle $A\times B$, the map $y\mapsto \eta_{y}(B)$ is measurable and 
\begin{equation}\label{disintegralogo}
\eta(A\times B)=\int_{A} \eta_{y}(B)\, d\,\Pi_{1*}\eta.
\end{equation}
The family  $\{\eta_{y}\colon y\in Y\}$ is called the \emph{disintegration of $\eta$} with respect to $\Pi_{1*}\eta$,
see Rokhlin \cite{Rolin}. The probability measure $(\Pi_{1})_{*}\eta$ on $Y$ is called the \emph{first marginal} of $\eta$. Reciprocally, let $\{\eta_{y}\colon y\in Y\}$ be a family of probability measures  on the space $M$ such that for every measurable set $B\subset M$ the map $y\mapsto \eta_{y}(B)$ is measurable. Then for every probability measure 
$\tau$ on $Y$ there is unique probability measure $\eta$ on the product space $Y\times M$
 with first marginal $\tau$ whose disintegration with respect to $\tau$ is given by this family. Indeed, we define $\eta$ as in \eqref{disintegralogo} with $\tau$ in the place of $\Pi_{1*}\eta$.

Let $\varphi$ be the Markovian random iteration associated with the pair $(p,m)$ and the map $f$.
 We observe that if $\nu$ is a stationary measure of $\varphi$, then the first marginal of $\nu$ 
is a stationary measure of the transition probability $p$ (see Proposition \ref{invariant} below). 
Therefore, since we are assuming that $p$ has a unique stationary measure $m$, we conclude that $m$ is the first marginal of $\nu$. In particular, we can consider the disintegration $\{\nu_{\alpha}\colon \alpha \in E\}$ of $\nu$ with respect to $m$.  
 
 Let $F$ be the skew product induced by $\varphi$.
An $F$-invariant measure  with first marginal $\mathbb{P}$ is called a $\varphi$-\emph{invariant measure}.  Let $I_{0}(\varphi)$ be the set of $\varphi$-invariant measures $\hat \mu$  for which there is a family $\{\mu_{\alpha}\colon \alpha\in E\}$ of probability measures on $M$ such that 
 $$
 \hat \mu_{\omega}=\mu_{\omega_{0}}
 $$ 
 for every $\omega$, where the family $\{\hat \mu_{\omega}\colon \omega\in \Sigma\}$ is the disintegration of $\hat \mu$ with respect to $\mathbb{P}$. In other words,  $I_{0}(\varphi)$ consists of $\varphi$-invariant measures whose disintegration depends only on the zeroth coordinate of $\omega$.

Our first main theorem state a one-to-one correspondence between 
the sets $I_{0}(\varphi)$ and $S(\varphi)$. This bijection is given explicitly,  and the proof relies on the characterization of stationary measures and 
invariant measures of $I_{0}(\varphi)$ using  disintegration of measures (see Propositions \ref{invariant} and \ref{stati}). To this end, we need the following definition. 
We say that a transition  probability $q$ on $E$ is in \emph{duality} with the transition probability $p$ relative to the stationary measure $m$ if for every measurable sets $A$ and $B$ we have
$$
\int_{B} p(\alpha,A)\,dm(\alpha)=\int_{A} q(\beta,B)\,dm(\beta).
$$
On standard measurable spaces, there is an essentially unique  transition probability in duality 
with $p$ relative to $m$, see \cite[Lemma 4.7 and Theorem $4.9$]{Revuz}.

\begin{mtheorem}\label{one}   Let $\varphi$ be a Markovian random iteration associated with the pair $(p,m)$ and the map $f$. Let $q$ be the transition probability in duality with $p$ relative to $m$. Then, 
there is a one-to-one correspondence between the sets $S(\varphi)$ and $I_{0}(\varphi)$
given by 
$$
\hat \mu\in I_{0}(\varphi)\mapsto \nu\in \mathcal{S(\varphi)},\quad\, \nu_{\alpha}=
f_{\alpha*}\mu_{\alpha}
$$
and 
$$
\nu \in \mathcal{S(\varphi)}\mapsto \hat \mu \in
 I_{0}(\varphi),\quad \mu_{\alpha}=\int \nu_{\beta}\, q(\alpha,d\beta).
$$
\end{mtheorem}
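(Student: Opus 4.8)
The plan is to reduce everything to two characterization results, which the excerpt announces as Propositions \ref{invariant} and \ref{stati}: the first should say that a probability measure $\nu$ on $E\times M$ with first marginal $m$ lies in $\mathcal{S}(\varphi)$ if and only if its disintegration $\{\nu_\alpha\}$ satisfies a fixed-point relation coming from the transition probability $\hat p$ in \eqref{transition}; the second should say that an $F$-invariant measure $\hat\mu$ with first marginal $\mathbb{P}$ lies in $I_0(\varphi)$ if and only if its disintegration has the form $\hat\mu_\omega=\mu_{\omega_0}$ with $\{\mu_\alpha\}$ satisfying a companion relation. Concretely, I expect the stationarity of $\nu$ to unwind, using \eqref{transition} and the duality identity $\int_B p(\alpha,A)\,dm(\alpha)=\int_A q(\beta,B)\,dm(\beta)$, to the statement that for $m$-a.e.\ $\alpha$,
$$
\nu_\alpha=f_{\alpha*}\!\left(\int \nu_\beta\, q(\alpha,d\beta)\right),
$$
and I expect $F$-invariance of a measure in $I_0(\varphi)$ (with first marginal $\mathbb{P}$, which is automatically $\sigma$-invariant since $m$ is stationary) to unwind, using the definition \eqref{defiskewproduct} of $F$ and the Markov property of $\mathbb{P}$, to the statement that for $m$-a.e.\ $\alpha$,
$$
\mu_\alpha=\int f_{\beta*}\mu_\beta\; q(\alpha,d\beta).
$$

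With those two propositions in hand, the proof of Theorem \ref{one} is essentially a formal verification that the two displayed maps are mutually inverse and land in the right sets. First I would take $\hat\mu\in I_0(\varphi)$ with associated family $\{\mu_\alpha\}$, define $\nu$ by $\nu_\alpha=f_{\alpha*}\mu_\alpha$ together with first marginal $m$, and check that $\{\nu_\alpha\}$ satisfies the stationarity relation above: substituting $\nu_\beta=f_{\beta*}\mu_\beta$ into $f_{\alpha*}\int\nu_\beta\,q(\alpha,d\beta)=f_{\alpha*}\int f_{\beta*}\mu_\beta\,q(\alpha,d\beta)=f_{\alpha*}\mu_\alpha=\nu_\alpha$, where the middle equality is exactly the invariance relation for $\{\mu_\alpha\}$. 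Hence $\nu\in\mathcal{S}(\varphi)$ by Proposition \ref{stati}. Conversely, starting from $\nu\in\mathcal{S}(\varphi)$ I would set $\mu_\alpha=\int\nu_\beta\,q(\alpha,d\beta)$, form $\hat\mu$ via the disintegration $\hat\mu_\omega=\mu_{\omega_0}$ over $\mathbb{P}$, and verify the invariance relation for $\{\mu_\alpha\}$: $\int f_{\beta*}\mu_\beta\,q(\alpha,d\beta)=\int f_{\beta*}\!\big(\int\nu_\gamma\,q(\beta,d\gamma)\big)q(\alpha,d\beta)=\int\nu_\beta\,q(\alpha,d\beta)=\mu_\alpha$, using the stationarity relation $\nu_\beta=f_{\beta*}\int\nu_\gamma\,q(\beta,d\gamma)$ in the middle step. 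So $\hat\mu\in I_0(\varphi)$ by Proposition \ref{invariant}.

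Finally I would check the two compositions are the identity. Going $I_0(\varphi)\to\mathcal{S}(\varphi)\to I_0(\varphi)$ sends $\{\mu_\alpha\}$ to $\{\nu_\alpha=f_{\alpha*}\mu_\alpha\}$ to $\{\alpha\mapsto\int\nu_\beta\,q(\alpha,d\beta)=\int f_{\beta*}\mu_\beta\,q(\alpha,d\beta)=\mu_\alpha\}$, again by the invariance relation; and going the other way sends $\{\nu_\alpha\}$ to $\{\mu_\alpha=\int\nu_\beta\,q(\alpha,d\beta)\}$ to $\{\alpha\mapsto f_{\alpha*}\mu_\alpha=f_{\alpha*}\int\nu_\beta\,q(\alpha,d\beta)=\nu_\alpha\}$ by the stationarity relation. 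Throughout, the measures are only determined up to $m$-null sets, so I would be careful to phrase each identity as holding $m$-a.e., and note that since $M$ and $E$ are standard the disintegrations exist and are essentially unique, so a.e.\ equality of disintegrations is equivalent to equality of the measures.

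The main obstacle I anticipate is not the algebra above but the careful proof of the two characterization propositions, in particular the appearance of the dual kernel $q$: translating $F$-invariance into the relation $\mu_\alpha=\int f_{\beta*}\mu_\beta\,q(\alpha,d\beta)$ requires pushing the disintegration of $\hat\mu$ one step forward under $F$ and then using $\mathbb{P}$'s Markov structure to identify the conditional law of $\omega_0$ given $\omega_1$ with $q(\omega_1,\cdot)$ — this time-reversal is exactly where duality relative to $m$ enters, and getting the measurability and the a.e.\ bookkeeping right (e.g., that the map $\alpha\mapsto\int\nu_\beta\,q(\alpha,d\beta)$ is a legitimate measurable family of probability measures) is the delicate part. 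A secondary technical point is confirming that $\mathbb{P}$ is $\sigma$-invariant and that an $F$-invariant measure automatically has $\sigma$-invariant first marginal, so that the class $I_0(\varphi)$ is nonempty exactly when $\mathcal{S}(\varphi)$ is.
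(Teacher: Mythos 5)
Your proposal is correct and follows essentially the same route as the paper: it reduces Theorem \ref{one} to the two disintegration characterizations (Propositions \ref{invariant} and \ref{stati}), correctly anticipates their content—$\nu_\alpha=\int f_{\alpha*}\nu_\beta\,q(\alpha,d\beta)$ for stationarity and $\mu_\alpha=\int f_{\beta*}\mu_\beta\,q(\alpha,d\beta)$ for $\varphi$-invariance, with the dual kernel $q$ entering via time reversal under $\mathbb{P}$—and then verifies the same algebra showing $\Theta$ and $\Xi$ are mutually inverse. The only blemish is a harmless citation slip in the body: you invoke Proposition \ref{stati} to conclude $\nu\in\mathcal{S}(\varphi)$ and Proposition \ref{invariant} to conclude $\hat\mu\in I_0(\varphi)$, which swaps the references relative to both your own initial assignment and the paper's.
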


This correspondence is classical in the i.i.d. case. Let us explain this a bit more precisely. If $\mathbb{P}$ is the product measure $m^\mathbb{N}$, then for every $x\in M$, the sequence of random variables
$
\omega \mapsto f^{n}_{\omega}(x)
$
is a homogeneous Markov chain whose transition probability is given by 
$$
p(x,A)=\int \mathds{1}_{A}(f_{\alpha}(x))\, dm(x) \qquad\mbox{for every}\,\,\, A\in \mathscr{E}. 
$$
In \cite{Ohno}, Ohno has shown  that a probability measure $\nu$ is a stationary measure of $p$ if and only if the product measure $\mathbb{P} \times \nu$ is $F$-invariant measure.

 Moreover, Ohno proved that a stationary measure $\nu$ is ergodic if and only if the product measure
 $\mathbb{P} \times \nu$ is an ergodic  $F$-invariant measure. See \cite{Kifer, vianalivro} for different proofs of this fact. 
Assuming a certain condition on the pair $(p,m)$, we obtain the same result for Markovian random iterations (see Theorem \ref{ergodic} below). To introduce this condition denote by $p_{\alpha}$ the probability measure $p_{\alpha}(A)=p(\alpha,A)$. We say that  the pair  $(p,m)$ is \emph{bounded} if  $p_{\alpha}$ is absolutely  continuous with respect to $m$ for every $\alpha$  and there is a constant $C$ with $0<C\leq 1$ such that
\begin{equation}\label{boundedd}
C\leq \frac{dp_{\alpha}}{dm}(\beta)\leq C^{-1}
\end{equation}
for every $\beta$ and $\alpha$ in $E$. Moreover, we say  that a Markovian random iteration $\varphi$ associated with $(p,m)$ is \emph{bounded} if the pair $(p,m)$ is bounded.

Denoting by $\Xi\colon S(\varphi)\to I_{0}(\varphi)$ the bijection of Theorem \ref{one}, we are now ready to state our second main result.
\begin{mtheorem}\label{ergodic}
Let $\varphi$ be a bounded Markovian random iteration. Then, a stationary measure $\nu\in S(\varphi)$ is ergodic if and  only if the the $\varphi$-invariant measure   $\Xi(\nu)$ is  ergodic.
\end{mtheorem}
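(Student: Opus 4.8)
The plan is to establish the two implications separately, since only one of them uses the boundedness hypothesis. I would first observe that $\Xi$ is affine: if $\nu=t\nu^{1}+(1-t)\nu^{2}$ with $\nu^{i}\in S(\varphi)$, then all three measures have first marginal $m$, so by uniqueness of disintegration $\nu_{\alpha}=t\nu^{1}_{\alpha}+(1-t)\nu^{2}_{\alpha}$ for $m$-a.e.\ $\alpha$, and substituting into the explicit formulas of Theorem~\ref{one} gives $\Xi(\nu)=t\,\Xi(\nu^{1})+(1-t)\,\Xi(\nu^{2})$. Since $\Xi$ is a bijection, both $\Xi$ and $\Xi^{-1}$ are affine bijections between the convex sets $S(\varphi)$ and $I_{0}(\varphi)$, hence carry extreme points to extreme points. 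Now, an ergodic stationary measure is exactly an extreme point of $S(\varphi)$, and an ergodic $F$-invariant measure is an extreme point of the (larger) convex set of \emph{all} $F$-invariant probability measures, hence a fortiori an extreme point of $I_{0}(\varphi)$. Therefore, if $\Xi(\nu)$ is ergodic it is extreme in $I_{0}(\varphi)$, so $\nu=\Xi^{-1}(\Xi(\nu))$ is extreme in $S(\varphi)$, i.e.\ $\nu$ is ergodic. No boundedness is needed for this direction.

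For the converse, suppose $\nu\in S(\varphi)$ is ergodic; I must show $\hat\mu\eqdef\Xi(\nu)$ is ergodic, and it suffices to show that every bounded $F$-invariant measurable $\psi\colon\Sigma\times M\to\mathbb{R}$ is $\hat\mu$-a.e.\ constant. I would run the harmonic-function argument of Ohno in the Markovian setting. \emph{Step 1.} I would check that, under $\hat\mu$, the process $Z_{n}(\omega,x)=(\omega_{n-1},f_{\omega}^{n}(x))$, $n\ge 1$, is a \emph{stationary} Markov chain with transition $\hat p$ and one-dimensional marginal $\nu$: since $\hat\mu\in I_{0}(\varphi)$ its disintegration over $\mathbb{P}$ depends only on $\omega_{0}$, so $x$ is conditionally independent of $(\omega_{1},\omega_{2},\dots)$ given $\omega_{0}$ under $\hat\mu$, and then the Markov property with transition $\hat p$ reduces to the Markov property of $\mathbb{P}$; the identity $\nu_{\alpha}=f_{\alpha*}\mu_{\alpha}$ of Theorem~\ref{one} identifies the $\hat\mu$-law of $Z_{1}$ with $\nu$, which is $\hat p$-stationary, whence all $Z_{n}$ have $\hat\mu$-law $\nu$. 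I expect this step to be the main obstacle, and this is where boundedness enters: through the duality relation and Fubini it forces $\tfrac{dq_{\alpha}}{dm}\in[C,C^{-1}]$ for all $\alpha$, so that the formula $\mu_{\alpha}=\int\nu_{\beta}\,q(\alpha,d\beta)$ and the manipulations of the disintegration of $\hat\mu$ it feeds into are valid for every $\alpha$, not merely $m$-a.e.

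\emph{Step 2.} Using $\psi\circ F^{n}=\psi$ and conditioning on $\mathcal{G}_{n}\eqdef\sigma(\omega_{0},\dots,\omega_{n-1},x)$, I would obtain
$$
\mathbb{E}_{\hat\mu}[\psi\mid\mathcal{G}_{n}]=H(Z_{n}),\qquad H(\alpha,y)\eqdef\int_{\Sigma}\psi(\xi,y)\,\mathbb{Q}^{\alpha}(d\xi),
$$
where $\mathbb{Q}^{\alpha}$ is the conditional law under $\mathbb{P}$ of $(\omega_{1},\omega_{2},\dots)$ given $\omega_{0}=\alpha$. Since $\psi(\xi,f_{\beta}y)=\psi(\beta\ast\xi,y)$ (writing $\beta\ast\xi$ for $\xi$ with $\beta$ prepended), a short computation gives $\hat p H=H$: the law of $\beta\ast\xi$ with $\beta$ drawn from $p(\alpha,\cdot)$ and $\xi$ from $\mathbb{Q}^{\beta}$ is exactly $\mathbb{Q}^{\alpha}$. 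Thus $H$ is a bounded $\hat p$-harmonic function.

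\emph{Step 3.} Ergodicity of $\nu$ forces $H$ to be $\nu$-a.e.\ equal to the constant $c\eqdef\int\psi\,d\hat\mu$: the bounded martingale $H(Z_{n})$ on the path space of the chain converges, its limit is invariant under the shift, hence constant by ergodicity, and then $H(Z_{0})$ equals that constant a.s. By Step~1 the $\hat\mu$-law of each $Z_{n}$ is $\nu$, so $H(Z_{n})=c$ $\hat\mu$-a.e.\ for every $n\ge 1$; on the other hand the $\sigma$-algebras $\mathcal{G}_{n}$ increase to the full $\sigma$-algebra on $\Sigma\times M$, so L\'evy's martingale convergence theorem gives $\mathbb{E}_{\hat\mu}[\psi\mid\mathcal{G}_{n}]\to\psi$ $\hat\mu$-a.e. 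Combining this with Step~2 yields $\psi=c$ $\hat\mu$-a.e., so $\hat\mu$ is ergodic. The delicate points, which I would treat with care, are the conditional-independence statements of Step~1 (where the $I_{0}$-structure and the control of $q$ are crucial) and the verification that the averaged function $H$ is genuinely $\hat p$-harmonic in Step~2.
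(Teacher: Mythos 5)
Your proof is correct but takes a genuinely different route from the paper's. For the direction ``$\Xi(\nu)$ ergodic $\Rightarrow$ $\nu$ ergodic'', the paper applies the Birkhoff ergodic theorem to the skew product, computes $\int \hat\phi\,d\hat\mu=\int\phi\,d\nu$, and then transfers the $\hat\mu$-a.e.\ Ces\`aro convergence to $\mathbb{P}\times\Pi_{2*}\nu$-a.e.\ convergence via the two-sided domination $C\hat\mu\leq\mathbb{P}\times\Pi_{2*}\nu\leq C^{-1}\hat\mu$ (Lemma \ref{equivalente}), which is where boundedness enters. You instead use a Choquet-type argument: $\Xi$ is an affine bijection, ergodic stationary $=$ extreme in $S(\varphi)$, ergodic $F$-invariant $\Rightarrow$ extreme in $I_0(\varphi)$; this is clean, and correctly observes that boundedness is not needed here. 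For the converse, the paper again uses Ces\`aro averages (L\'evy $+$ Birkhoff for Markov chains) together with the quantitative Lemmas \ref{difmarkocase} and \ref{equivalente}, yielding only the inequality $h(\omega,x_0)\geq C^2\int h\,d\hat\mu$, which suffices because $h$ is an indicator; you instead run Ohno's harmonic-function argument: show that $Z_n$ is, under $\hat\mu$, a stationary $\hat p$-chain with law $\nu$, that $\mathbb{E}_{\hat\mu}[\psi\mid\mathcal G_n]=H(Z_n)$ for a bounded $\hat p$-harmonic $H$, and then combine ergodicity of $\nu$ (which forces $H$ to be $\nu$-a.e.\ constant) with L\'evy upward convergence. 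This gives an equality $\psi=\int\psi\,d\hat\mu$ rather than an inequality, and it removes the need for Lemma \ref{equivalente} altogether. One remark: your stated reason for where boundedness is needed (making $\mu_\alpha=\int\nu_\beta\,q(\alpha,d\beta)$ valid for \emph{every} $\alpha$ rather than $m$-a.e.) is not actually required: the duality measure $q(\alpha,d\beta)\,dm(\alpha)$ has second marginal $m$, so the $m$-a.e.\ identities propagate to the integrals against $q(\alpha,\cdot)$ for $m$-a.e.\ $\alpha$, which is all that the disintegration manipulations use. As a consequence your argument does not appear to use the boundedness hypothesis at all, and so it in fact proves a slightly stronger statement than the one in the paper; that the paper's proof needs boundedness (through Lemmas \ref{difmarkocase} and \ref{equivalente}) while yours does not is a real structural difference worth keeping in mind.
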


We now present some examples of bounded transition probabilities.
\begin{example}[Random iteration driven by random walks]
Consider a sequence $\{W_{n}\}_{n\geq 0}$ of i.i.d. random variables taking values in $S^{1}=\mathbb{R}\setminus\mathbb{Z}$ whose common distribution is the Lebesgue measure $Leb$ of $S^{1}$. Then the sequence 
$$
X_{n}=W_{1}+\dots+W_{n}
$$
is a Markov chain with transition probability given by $p(x, A)=Leb(A)$. Note that
$Leb$ is a stationary measure of this Markov chain. In particular, the Radon-Nikodym derivative of $p_{x}$ with respect to the Lebesgue measure is equal to one. Hence, the pair $(p,Leb)$ is bounded.

More generally,
let $G$ be a compact topological group endowed with its Borel $\sigma$-algebra $\mathscr{G}$. Recall that a \emph{left random walk} on $G$ with law $\mu$ is a Markov chain $\{X_{n}\}$ with transition probability given by 
$$
p(g,A)=\mu\ast\delta_{g}(A) \qquad \mbox{for every}\,\,\, A\in \mathscr{G},
$$
where $\ast$ denotes the convolution of measures.
We observe that the
Haar measure $m$ of $G$ is a stationary measure of $p$. Let us assume that $\mu$ is absolutely continuous with respect to  $m$ and let $f$ denote the Radon-Nikodym derivative $\frac{d\mu}{dm}$. We claim that
$$
\frac{dp_{g}}{dm}(h)=f(hg^{-1}). 
$$
Indeed,  we have
\[ 
\begin{split}
p_{g}(A)=\mu\ast\delta_{g}(A)&=\int \mathds{1}_{A}(hx)\,d\mu(h)\,d\delta_{g}(x)\\
&=\int \mathds{1}_{A}(hg)\,d\mu(h)\\
&=\int f(h) \mathds{1}_{A}(hg)\, dm(h)\\
&=\int f(hg^{-1})\mathds{1}_{A}(h)\, dm(h)=\int_{A}f(hg^{-1})\, dm(h),
\end{split}
\] 
where in the last equality we use that the Haar measure on a compact topological group is invariant for left and right translations.

In particular, if there is $C>0$ such that the Radon-Nikodym derivative $f$ satisfies 
$$
C\leq f(g)\leq C^{-1}
$$
for  every $g\in G$, then the pair $(p, m)$ is bounded.
\end{example}

\begin{example}[Random iteration of finitely many maps]
Consider a transition matrix $P=(p_{ij})$ with positive entries. Then it follows from Perron-Frobenius theorem that the matrix  $P$ has a unique stationary probability vector $p=(p_{i})$ with positive entries. Thus,  condition \eqref{boundedd} is verified for the pair $(P,p)$.
\end{example}

\subsection{Local synchronization}\label{localsynchronization}
Finally, we combine the correspondence between $I_{0}(\varphi)$ and $S(\varphi)$ with the invariance principle  \cite[Theorem F]{Malicet} to obtain a local synchronization 
property for Markovian random iterations of homeomorphisms of the circle $S^{1}$.

In what follows in this section, $p$ is a transition probability on a compact metric space $E$ (endowed with its Borel $\sigma$-algebra) having a unique stationary measure $m$, the pair $(p,m)$ is bounded, and we assume that
the mapping $\alpha\mapsto p(\alpha,\cdot)$ is continuous in the weak$\star$-topology.

\begin{mtheorem}\label{teocontraction}
Let $\varphi$ be a Markovian random iteration associated with the pair $(p,m)$ and a measurable map $f\colon E\times S^{1}\to S^{1}$. Assume that the map $f_{\alpha}$ is a homeomorphism of $S^{1}$  for every $\alpha$  and there is no probability measure $\mu$ such that $f_{\alpha*}\mu=\mu$ for $m$-almost every $\alpha$.
 Then there is $\rho<1$ such that for any $x\in S^{1}$, for $\mathbb{P}$-almost every $\omega\in \Sigma$,
 there is a neighbourhood $I$ of $x$ such that 
$$
\mathrm{diam}\, f_{\omega}^{n}(I)\leq \rho^{n} \quad \mbox{for every} \quad n\geq 0.
$$
\end{mtheorem}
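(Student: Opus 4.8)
The plan is to manufacture an $F$-invariant measure from a stationary measure of the Markov chain $Z_{n}=(\omega_{n-1},f_{\omega}^{n}(x))$, transport it to the natural extension, feed it into the invariance principle of \cite{Malicet}, and then replay Malicet's circle argument. As a preliminary, one produces a stationary measure $\nu\in S(\varphi)$: since $E\times S^{1}$ is compact and each $\omega_{n-1}$ has law $m$, the Ces\`aro averages $\frac1N\sum_{n=1}^{N}\mathrm{law}(Z_{n})$ all have first marginal $m$, and any weak$\star$ limit point is stationary for $\hat p$. The only delicate point is that $f$ is merely measurable, so $\hat p$ need not be Feller; this is handled by passing to the natural extension $E^{\mathbb Z}$ and using that the fibre maps $f_{\alpha}$ are continuous (a Scorza--Dragoni approximation renders the relevant averaging operator weak$\star$-continuous), or by invoking the Krylov--Bogolyubov-type statement for Markov chains on a compact state space. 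By Theorem \ref{one}, $\hat\mu:=\Xi(\nu)$ is then an $F$-invariant measure with first marginal $\mathbb P$ and disintegration $\hat\mu_{\omega}=\mu_{\omega_{0}}$, $\mu_{\alpha}=\int\nu_{\beta}\,q(\alpha,d\beta)$; I lift $\hat\mu$ to an $\hat F$-invariant measure on $E^{\mathbb Z}\times S^{1}$ over the two-sided Markov measure $\hat{\mathbb P}$, which is ergodic because $p$ has a unique stationary measure.

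Next I would apply the invariance principle \cite[Theorem F]{Malicet} to $\hat\mu$, and this is where the standing hypotheses enter. The boundedness of $(p,m)$ is exactly a Doeblin condition $p(\alpha,\cdot)\ge C\,m$, which yields a successful coupling of the driving chain; this permits either realising the Markovian iteration as a genuine i.i.d.\ iteration of measurable maps on $E\times S^{1}$ (driven by i.i.d.\ variables, with the present symbol recovered as a backward limit) so that Theorem F applies as stated, or running Malicet's martingale-and-semicontinuity argument directly over the Markov base, in which case the weak$\star$-continuity of $\alpha\mapsto p(\alpha,\cdot)$ together with the compactness of $E$ are precisely the ingredients the argument uses. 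The output is the dichotomy: \emph{either} there is a probability measure $\mu$ on $S^{1}$ with $f_{\alpha*}\mu=\mu$ for $m$-almost every $\alpha$, \emph{or} for $\hat{\mathbb P}$-almost every $\omega$ the fibre measure is a single Dirac mass $\hat\mu_{\omega}=\delta_{a(\omega)}$, with $a$ measurable and depending only on the past of $\omega$; equivalently, the backward compositions contract: $(f_{\omega_{-1}}\circ\cdots\circ f_{\omega_{-n}})_{*}\theta\to\delta_{a(\omega)}$ weak$\star$ as $n\to\infty$ for a reference measure $\theta$ (e.g.\ Lebesgue). By hypothesis the first alternative is excluded, so the second holds.

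Finally I would deduce the statement from the Dirac alternative by Malicet's circle argument, which transcribes to our cocycle over the Markov base once the conclusion of the previous step is in hand. Reindexing the backward contraction forward and using the equivariance $a(\sigma^{n}\omega)=f_{\omega}^{n}(a(\omega))$, one first obtains $\rho_{1}<1$ such that, for $\hat{\mathbb P}$-almost every $\omega$, there are a constant $C(\omega)<\infty$ and a set $G_{\omega}\subset S^{1}$ of full Lebesgue measure so that every $x\in G_{\omega}$ lies in an interval $I$ with $\mathrm{diam}\,f_{\omega}^{n}(I)\le C(\omega)\rho_{1}^{n}$ for all $n$; here monotonicity of the homeomorphisms $f_{\alpha}$ is what turns ``nearly all the mass is squeezed into a tiny arc'' into ``an honest interval is squeezed''. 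Since $\mathrm{diam}\,f_{\omega}^{n}(I)$ does not increase when $I$ shrinks, fixing $\rho_{1}<\rho<1$ one removes the constant: only finitely many $n$ violate $C(\omega)\rho_{1}^{n}\le\rho^{n}$, and those are absorbed by shrinking $I$, while $G_{\omega}$ is open, the same $I$ serving every point of $I$. To replace $G_{\omega}$ by all of $S^{1}$ one invokes Malicet's endgame: the complement $S^{1}\setminus G_{\omega}$ is closed and forward-equivariant along the cocycle, and the absence of a common invariant measure forces it to be empty for $\hat{\mathbb P}$-almost every $\omega$. The main obstacle is the middle step --- making the invariance principle of \cite{Malicet} available over a Markovian rather than Bernoulli base, and checking that boundedness and weak$\star$-continuity are exactly what the argument needs --- whereas the preliminary step is a routine compactness argument and the final step is a transcription of the circle argument in \cite{Malicet}.
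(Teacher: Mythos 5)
You have the right ingredients in view (Theorem~\ref{one}, the invariance principle, Krylov--Bogolyubov, the Doeblin condition), but there is a genuine gap in the middle of the argument, and one of the two routes you float around it does not close it. The invariance principle of \cite{Malicet} (Theorem~\ref{ivp} in the paper) is \emph{not} a dichotomy ``common invariant measure versus Dirac fibres''; it says only that if $\lambda(\hat\mu)=0$ then $f_{\omega_0*}\hat\mu_\omega=\hat\mu_{\sigma(\omega)}$ a.e. The paper uses it purely negatively: if some $\hat\mu\in I_0(\varphi)$ had $\lambda(\hat\mu)=0$, then the equivariance, the fact that the disintegration depends only on $\omega_0$, and $m\ll p_\alpha$ would force $\beta\mapsto\mu_\beta$ to be $m$-a.e.\ constant, hence a common invariant measure, a contradiction; the output is simply $\lambda(\hat\mu)<0$ for every $\hat\mu\in I_0(\varphi)$, with no information about the shape of the fibres. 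But knowing this for the measures in $I_0(\varphi)$ --- even all of them --- does not yet give the pointwise statement $\lambda(\omega,x)\le\lambda_0<0$ for every $x$ and $\mathbb P$-a.e.\ $\omega$. That passage is exactly the content of Theorem~\ref{upperr} in the paper, the Markovian analogue of \cite[Lemma~3.20]{Malicet}: for $F$-sub-invariant, lower semi-continuous $\zeta$ one proves $\zeta(\omega,x_0)\ge C^2\sup_{\hat\mu\in U_{\omega,x_0}}\int\zeta\,d\hat\mu$, where $U_{\omega,x_0}=\Xi(S_{\omega,x_0})$ is the image under the bijection of \emph{all} stationary accumulation points of the empirical measures along the orbit of $(\omega,x_0)$. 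Its proof needs Lemma~\ref{difmarkocase} (a conditional-expectation lower bound with the constant $C$ from boundedness), L\'evy's martingale convergence, Furstenberg's Krylov--Bogolyubov lemma for Feller chains (where weak$\star$-continuity of $\alpha\mapsto p(\alpha,\cdot)$ and continuity of the $f_\alpha$ enter), Theorem~\ref{one}, and Lemma~\ref{equivalente}. You acknowledge that ``Malicet's martingale-and-semicontinuity argument'' would have to be rerun over the Markov base, but this is precisely the non-routine step your outline skips; manufacturing a single $\hat\mu=\Xi(\nu)$ from one Krylov--Bogolyubov limit is not enough, because the pointwise bound needs the whole family $U_{\omega,x_0}$ attached to each $(\omega,x_0)$, after which one must still upgrade to a uniform $\lambda_0$ via upper semicontinuity of $\bar\lambda(x)=\int\lambda(\omega,x)\,d\mathbb P(\omega)$ on the compact circle.

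The alternative you suggest --- using the Doeblin coupling to realise the Markovian iteration as a genuine i.i.d.\ iteration so that \cite[Theorem~A]{Malicet} applies verbatim --- does not work as stated: the random-mapping representation turns $(\omega_n,f_\omega^n(x))$ into an i.i.d.\ iteration of measurable maps of $E\times S^1$, not of $S^1$; those maps are not circle homeomorphisms, so Malicet's theorem says nothing about them. This is, in effect, why the paper must prove Theorem~\ref{upperr} from scratch rather than quote Lemma~3.20.
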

Theorem \ref{teocontraction} is
new even in the setting of random iterations of $C^{1}$ diffeomorphisms and extends Theorem $\mbox{A}$ of  \cite{Malicet} for Markovian random iterations.
Let us observe that this synchronization phenomenon for Markovian random iterations of finitely many maps on compact subsets of a finite dimensional Euclidean space 
was studied already in \cite{DiazMatias}.  Assuming a purely topological condition on the maps, called \emph{splitting condition}, the authors proved a contraction exponentially fast of the whole space under the action of the random iteration. 
However,  in the circle $S^{1}$, this splitting condition is never satisfied.  

\section{Proof of Theorem \ref{one}}
\label{sectiontone}
Let $(M,\mathscr{B})$ and $(E,\mathscr{E})$ be standard measurable spaces, $f\colon E\times M\to M$ be a measurable map and $p$ be a transition probability on $E$ with a unique stationary measure $m$.
Let $\varphi$ be the Markovian random iteration associated with the pair $(p,m)$ and the map $f$, as defined in Section \ref{prelimi}.
We start by giving a characterization of the elements of $S(\varphi)$ and $I_{0}(\varphi)$ using the disintegration of measures. In what follows, $q$ denotes the transition probability in duality with $p$ relative to $m$.

\subsection{Stationary measures}

Let $\hat  p$ be  the transition probability  on $E\times M$ induced by $\varphi$ as defined in \eqref{transition}.
The {\emph{Markov operator}}  $\mathcal{P}$ induced by $\hat p$ is defined as follows: given a probability measure 
 $\nu$ on $E\times M$ we define $\mathcal{P}\nu$  by
$$
 \mathcal{P}\nu(B)\eqdef \int  \hat{p}((\alpha,x),B)\,d\nu(\alpha,x) \quad \mbox{for}\,\, B\in \mathscr{E}\otimes \mathscr{B}.
 $$
  Note that by definition, a stationary measure of $\varphi$ is a fixed point of $\mathcal{P}$.
 
The following proposition is a useful characterization of stationary measures using  the disintegration of measures:

\begin{proposition}\label{invariant}
 Let $\nu$ be a probability 
measure on $E\times M$. The following statements hold: 
\begin{enumerate}
\item[(i)]
If $\nu$ is a stationary measure of $\varphi$, then $\nu$ has first marginal $m$.  

\item[(ii)] If $\nu$ has first marginal $m$, then $\mathcal{P}\nu$ has first marginal $m$ and its disintegration 
with respect to $m$ is given by 
$$
(\mathcal{P}\nu)_{\alpha}=\int f_{\alpha*}\nu_{\beta}\,q(\alpha,d\beta)
$$
for $m$-almost every $\alpha$.
\item[(iii)]
If $\nu$ has first marginal $m$, then $\nu$ is a stationary measure if and only if
$$
\nu_{\alpha}=\int f_{\alpha*}\nu_{\beta}\, q(\alpha,d\beta)
$$
for $m$-almost every $\alpha$.

\end{enumerate}

\end{proposition}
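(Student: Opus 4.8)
The plan is to unwind the definition of the Markov operator $\mathcal{P}$ into an explicit iterated integral and then trade the transition probability $p$ for its dual $q$ via the duality identity. For part (i), I would evaluate $\mathcal{P}\nu$ on rectangles of the form $A\times M$. Since $\mathds{1}_{A\times M}(\beta,f_\beta(x))=\mathds{1}_A(\beta)$, the definitions of $\hat p$ in \eqref{transition} and of $\mathcal{P}$ give
\[
\mathcal{P}\nu(A\times M)=\int_E p(\alpha,A)\,d(\Pi_{1*}\nu)(\alpha),
\]
so the first marginal of $\mathcal{P}\nu$ is the image of $\Pi_{1*}\nu$ under the operator on measures associated with $p$. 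If $\nu$ is stationary then $\mathcal{P}\nu=\nu$, hence $\Pi_{1*}\nu$ is a stationary measure of $p$, and by our standing uniqueness assumption $\Pi_{1*}\nu=m$.

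For part (ii), assume $\nu$ has first marginal $m$ with disintegration $\{\nu_\alpha\}$, so $d\nu(\alpha,x)=d\nu_\alpha(x)\,dm(\alpha)$. Substituting this into the definition of $\mathcal{P}\nu$ and using Tonelli to push the $x$-integration past the $p$-integration, for a measurable rectangle $A\times C$ one obtains
\[
\mathcal{P}\nu(A\times C)=\int_E\int_E \mathds{1}_A(\beta)\,(f_{\beta*}\nu_\alpha)(C)\,p(\alpha,d\beta)\,dm(\alpha).
\]
I would then use the duality relation in functional form: for every nonnegative measurable $g$ on $E\times E$,
\[
\int_E\int_E g(\alpha,\beta)\,p(\alpha,d\beta)\,dm(\alpha)=\int_E\int_E g(\alpha,\beta)\,q(\beta,d\alpha)\,dm(\beta),
\]
which follows from the defining identity of $q$ by a monotone class argument starting from indicators of rectangles. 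Applying it with $g(\alpha,\beta)=\mathds{1}_A(\beta)(f_{\beta*}\nu_\alpha)(C)$ rewrites the previous display as $\int_A\bigl(\int_E(f_{\beta*}\nu_\alpha)(C)\,q(\beta,d\alpha)\bigr)\,dm(\beta)$. Taking $C=M$ shows the first marginal of $\mathcal{P}\nu$ is $m$, and comparing with \eqref{disintegralogo} identifies the integrand (after relabelling $\alpha\leftrightarrow\beta$) as the asserted formula for $(\mathcal{P}\nu)_\alpha$; essential uniqueness of the disintegration turns this into a genuine $m$-a.e. identity.

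Part (iii) is then immediate: two probability measures on $E\times M$ with first marginal $m$ coincide if and only if their disintegrations with respect to $m$ agree for $m$-almost every $\alpha$, so $\mathcal{P}\nu=\nu$ is equivalent to $(\mathcal{P}\nu)_\alpha=\nu_\alpha$ for $m$-a.e.\ $\alpha$, which by (ii) is exactly the displayed fixed-point equation.

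The points I would check but not belabour are the measurability facts behind the Fubini--Tonelli and monotone class steps: that $(\alpha,\beta)\mapsto(f_{\beta*}\nu_\alpha)(C)$ is jointly measurable, obtained from the measurability of $\alpha\mapsto\nu_\alpha(D)$ and of $(\beta,x)\mapsto f(\beta,x)$ via a functional monotone class argument over product sets in $E\times M$, and the extension of the duality identity from indicators of rectangles to general nonnegative integrands; both rely on $E$ and $M$ being standard measurable spaces. I expect the main obstacle to be organizing the interchange of integrals correctly, since the disintegration factor $\nu_\alpha$ carries the very variable $\alpha$ that the duality relation swaps, so the integral must first be brought into the form $\int\int g(\alpha,\beta)\,p(\alpha,d\beta)\,dm(\alpha)$ before duality can be applied.
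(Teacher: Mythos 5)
Your argument is correct and follows essentially the same route as the paper: part (i) by evaluating $\mathcal{P}\nu$ on rectangles $A\times M$ to see that $\Pi_{1*}\nu$ is $p$-stationary and invoking uniqueness of $m$; part (ii) by disintegrating $\nu$, unwinding $\mathcal{P}\nu$ into an iterated integral of the form $\int\int g(\alpha,\beta)\,p(\alpha,d\beta)\,dm(\alpha)$, and applying the duality identity (your functional form of it is the paper's \eqref{permuta} after a dummy-variable swap); part (iii) as an immediate consequence via essential uniqueness of disintegrations. The measurability and monotone-class caveats you flag are indeed the technical points the paper leaves implicit, and your identification of the main organizational obstacle (getting the integral into the right shape before applying duality) matches what the paper's computation \eqref{serra} is doing.
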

\begin{proof}

We stat by proving item (i). Assume that $\nu$ is a stationary measure and let $m'=(\Pi_{1})_{*}\nu$. Then given a measurable set $A\in \mathscr{E}$, we have 
\[
\begin{split}
m'(A)=\nu (A\times M)=\mathcal{P}\nu(A)&=\int \int \mathds{1}_{A\times M}(\beta, f_{\beta}(x))\,p(\alpha,d\beta)\,d\nu(\alpha,x)
\\
&=\int \int \mathds{1}_{A}(\beta)\,p(\alpha,d\beta)\,d\nu(\alpha,x)\\
&=\int p(\alpha,A) \,d\nu(\alpha,x)\\
&=\int \int p(\alpha,A)\, \nu_{\alpha}(x)dm'(\alpha)
=\int p(\alpha,A)\, dm'(\alpha).
\end{split}
\]
Hence $m'$ is a stationary measure of $p$. Since we are assuming that $m$ is the unique stationary measure of $p$, we conclude that $m'=m$. 

To prove item (ii),
assume that $\nu$ has first marginal $m$. Then for every measurable set $A\in\mathscr{E}$, we have
\[
\begin{split}
\mathcal{P}\nu(A\times M)&=\int \int \mathds{1}_{A\times M}(\beta, f_{\beta}(x))\,p(\alpha,d\beta)\,d\nu(\alpha,x)\\
&=\int \int p(\alpha,A)\, \nu_{\alpha}(x)dm(\alpha)=\int \int p(\alpha,A)\, \nu_{\alpha}(x)dm(\alpha)=m(A),
\end{split}
\]
which implies that the first marginal of $\mathcal{P}\nu$ is $m$.

We now compute the disintegration of  $\mathcal{P}\nu$ with respect to $m$. First, we have 
\begin{equation}\label{serra}
\begin{split}
\mathcal{P}\nu(A\times B)&=
\int \int \mathds{1}_{A}(\beta)\mathds{1}_{B}(f_{\beta}(x))\,p(\alpha,d\beta)\,d\nu(\alpha,x)\\
&=\int\int \int \mathds{1}_{A}(\beta)\mathds{1}_{B}(f_{\beta}(x))\,p(\alpha,d\beta)\,d\nu_{\alpha}(x)dm(\alpha)
\\
&=\int\int \int \mathds{1}_{A}(\beta)\mathds{1}_{B}(f_{\beta}(x))\,d\nu_{\alpha}(x)\,p(\alpha,d\beta)dm(\alpha).
\end{split}
\end{equation}
Next, we need the following well-known property of transition probabilities in duality: for every measurable map $\kappa\colon E\times E\to [0,\infty)$ we have
\begin{equation}\label{permuta}
\int \kappa(\alpha,\beta)\, p(\alpha,d\beta)\, dm(\alpha)=\int \kappa(\beta,\alpha)\,
 q(\alpha,d\beta)\, dm(\alpha).
\end{equation}
Applying this property to the right term of the last equality in \eqref{serra} we obtain:
\[
\begin{split}
\mathcal{P}\nu(A\times B)&=\int\int \int \mathds{1}_{A}(\alpha)\mathds{1}_{B}(f_{\alpha}(x))\,d\nu_{\beta}(x)\,q(\alpha,d\beta)dm(\alpha)\\
&=\int_{A}\int f_{\alpha*}\nu_{\beta}(B)\,q(\alpha,d\beta)dm(\alpha).
\end{split}
\]
This implies that $(\mathcal{P}\nu)_{\alpha}=\int f_{\alpha*}\nu_{\beta}\, q(\alpha,d\beta)$ for 
$m$-almost every $\alpha$.

Finally, item (iii) follows immediately from items (i) and (ii).

\end{proof}

\subsection{Invariant measures}

The following proposition is a useful characterization of the elements of 
$I_{0}(\varphi)$, recall the definition of this set in Section \ref{Essephi}. In what follows $\mathbb{P}$ denotes the Markov measure on $\Sigma=E^{\mathbb{N}}$ associated with the pair $(p,m)$, as defined in Section \ref{prelimi}. 
\begin{proposition}\label{stati}
Let $\hat \mu$ be a probability measure on $\Sigma\times M$ with first marginal $\mathbb{P}$ and assume that there is a family $\{\mu_{\alpha}\colon \alpha\in E\}$ of probability measures on $M$ such that 
 $
 \hat \mu_{\omega}=\mu_{\omega_{0}}
 $
 for every $\omega$. Then $\hat \mu$ is $\varphi$-invariant if and only if 
$$
\mu_{\alpha}=\int f_{\beta*}\mu_{\beta}\, q(\alpha,d\beta)
$$
for $m$-almost every $\alpha$.
\end{proposition}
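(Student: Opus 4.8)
The plan is to compute the disintegration of $F_{*}\hat\mu$ with respect to $\mathbb{P}$ and to compare it with that of $\hat\mu$, using that disintegrations on standard measurable spaces are essentially unique (note $\Sigma\times M$ is standard, being a countable product of standard spaces times $M$). Two preliminary facts will be needed. First, since $m$ is a stationary measure of $p$, the Markov measure $\mathbb{P}$ is shift-invariant, so $F_{*}\hat\mu$ has first marginal $\sigma_{*}\mathbb{P}=\mathbb{P}$, just like $\hat\mu$; hence $\hat\mu$ is $\varphi$-invariant if and only if the disintegrations of $\hat\mu$ and of $F_{*}\hat\mu$ with respect to $\mathbb{P}$ agree $\mathbb{P}$-almost everywhere. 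Second, under $\mathbb{P}$ the conditional distribution of $\omega_{0}$ given $\sigma\omega$ is $q(\omega_{1},\cdot)$; equivalently, for every non-negative measurable $h\colon E\times\Sigma\to[0,\infty)$,
\[
\int_{\Sigma}h(\omega_{0},\sigma\omega)\,d\mathbb{P}(\omega)=\int_{\Sigma}\int_{E}h(\beta,\omega)\,q(\omega_{0},d\beta)\,d\mathbb{P}(\omega).
\]
I would prove this time-reversal identity by testing it on rectangles $A\times C$ with $A\in\mathscr{E}$ and $C$ a cylinder set, which reduces the computation to the duality relation \eqref{permuta} applied to a suitable non-negative kernel on $E\times E$; a monotone class argument then gives the general case.

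With this in hand, the main computation parallels the proof of Proposition \ref{invariant}(ii). Assuming first $g\geq 0$ bounded measurable on $\Sigma\times M$ (the general case follows by splitting into positive and negative parts), we have, using $F(\omega,x)=(\sigma\omega,f_{\omega_{0}}(x))$ and the hypothesis $\hat\mu_{\omega}=\mu_{\omega_{0}}$,
\[
\int g\,dF_{*}\hat\mu=\int_{\Sigma}\int_{M}g(\sigma\omega,f_{\omega_{0}}(x))\,d\mu_{\omega_{0}}(x)\,d\mathbb{P}(\omega).
\]
The inner integral depends on $\omega$ only through the pair $(\omega_{0},\sigma\omega)$, so applying the identity above with $h(\beta,\omega)=\int_{M}g(\omega,f_{\beta}(x))\,d\mu_{\beta}(x)$ yields
\[
\begin{aligned}
\int g\,dF_{*}\hat\mu&=\int_{\Sigma}\int_{E}\int_{M}g(\omega,f_{\beta}(x))\,d\mu_{\beta}(x)\,q(\omega_{0},d\beta)\,d\mathbb{P}(\omega)\\
&=\int_{\Sigma}\int_{M}g(\omega,y)\,d\Bigl(\int_{E}f_{\beta*}\mu_{\beta}\,q(\omega_{0},d\beta)\Bigr)(y)\,d\mathbb{P}(\omega).
\end{aligned}
\]
This exhibits $F_{*}\hat\mu$ as a probability measure with first marginal $\mathbb{P}$ whose disintegration with respect to $\mathbb{P}$ is $(F_{*}\hat\mu)_{\omega}=\int_{E}f_{\beta*}\mu_{\beta}\,q(\omega_{0},d\beta)$, which, like that of $\hat\mu$, depends only on $\omega_{0}$.

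To conclude, by essential uniqueness of the disintegration together with the first preliminary fact, $\hat\mu$ is $\varphi$-invariant if and only if $\mu_{\omega_{0}}=\int_{E}f_{\beta*}\mu_{\beta}\,q(\omega_{0},d\beta)$ for $\mathbb{P}$-almost every $\omega$; since the law of $\omega\mapsto\omega_{0}$ under $\mathbb{P}$ is $m$, this is equivalent to $\mu_{\alpha}=\int f_{\beta*}\mu_{\beta}\,q(\alpha,d\beta)$ for $m$-almost every $\alpha$, as claimed. The part I expect to require the most care is establishing the time-reversal identity cleanly (and, relatedly, checking that $\alpha\mapsto\int_{E}f_{\beta*}\mu_{\beta}\,q(\alpha,d\beta)$ is a measurable family of probability measures on $M$, so that it legitimately names a disintegration); the Fubini-type interchanges are harmless once $g$ is taken non-negative.
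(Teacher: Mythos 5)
Your proposal is correct and takes essentially the same approach as the paper: compute the disintegration of $F_{*}\hat\mu$ with respect to $\mathbb{P}$ via a time-reversal/duality identity, then compare with that of $\hat\mu$ by essential uniqueness of disintegrations. The identity you isolate is a mild generalization of the paper's Lemma~\ref{olharaprova} (which is your $h$ specialized to products $h(\alpha,\omega)=g(\alpha)u(\omega)$, enough because the paper tests only on rectangles $A\times B$), and the paper establishes that lemma via the Markov property rather than testing on cylinders, but the substance of both proofs is the duality relation \eqref{permuta} plus the uniqueness of disintegrations.
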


\begin{remark}\emph{
 In \cite{Malheiro}, Malheiro and Viana introduced a notion of stationary measure for 
Markovian random products of finitely many matrices $A_{1},\dots, A_{k}$. Let us observe that their notion is different from the one considered in this paper. Indeed, let $P=(p_{ij})$ be a $k\times k$  transition matrix with a unique stationary probability vector $p=(p_{1},\dots, p_{k})$. A stationary measure in \cite{Malheiro} is defined to be a $k$-tuple $(\mu_{1},\dots, \mu_{k})$ 
of probability measures such that 
$$
\mu_{i}=\sum_{j=1}^{k} \frac{p_{j}}{p_{i}}p_{ji}A_{j*}\mu_{j}.
$$
It turns out that the transition matrix $Q=(q_{ij})$ in duality with $P$ relative to $p$ is given by 
$$
q_{ij}=\frac{p_{j}}{p_{i}}p_{ji}.
$$
Hence, Proposition \ref{stati} implies that the $k$-tuple $(\mu_{1},\dots, \mu_{k})$ 
is the disintegration of an $F$-invariant measure whose disintegration depends only on the zeroth coordinate. Therefore, in view of Proposition \ref{invariant} and Proposition \ref{stati}, we conclude  that the notion of stationary measures considered in this paper and the one considered in \cite{Malheiro} are different.} 
\end{remark}
\begin{proof}[Proof of Proposition \ref{stati} ]
Let $F$ be the skew product induced by $\varphi$ as defined in \eqref{defiskewproduct}. Let us compute the disintegration of $F_{*}\hat \mu$. First, note that 
\begin{equation}\label{delta1}
F_{*}\hat \mu(A\times B)=\hat \mu(F^{-1}(A\times B))=\int \mathds{1}_{A}(\sigma(\omega))
\mu_{\omega_{0}}(f_{\omega_{0}}^{-1}(B))\, d\mathbb{P}(\omega).
\end{equation}
Now, we need the following lemma:

\begin{lemma}\label{olharaprova}
Given  measurable maps $u\colon \Sigma\to \mathbb[0,\infty)$ and $g\colon E\to \mathbb[0,\infty)$, we have
$$
\int u(\sigma(\omega))g(\omega_{0})\,d \mathbb{P}(\omega)=\int\left( u(\omega) \int g(\beta)\, q(\omega_{0},d\beta)\right)\, d\mathbb{P}(\omega).
$$
\end{lemma}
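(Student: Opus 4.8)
The plan is to verify the identity by reducing to the defining duality relation for $q$, testing first on indicator functions and then extending by the standard monotone class / monotone convergence argument. First I would recall that, because $\mathbb{P}$ is the Markov measure associated with $(p,m)$ and $m$ is stationary, the pair $(X_0,X_1)$ — i.e., the pair of first two coordinates $(\omega_0,\omega_1)$ under $\mathbb{P}$ — has joint law $dm(\alpha)\,p(\alpha,d\beta)$ on $E\times E$, while the conditional law of the shifted sequence $\sigma(\omega)$ given $\omega_0=\alpha$ and $\omega_1=\beta$ is again the Markov measure started from the deterministic value $\beta$; more precisely, for any bounded measurable $u\colon\Sigma\to[0,\infty)$ one has $\mathbb{E}_{\mathbb{P}}[u(\sigma(\omega))\mid \omega_0,\omega_1]=h(\omega_1)$ where $h(\beta)=\mathbb{E}_{\mathbb{P}_\beta}[u]$ and $\mathbb{P}_\beta$ denotes the Markov measure with the same transition probability $p$ but started from $\delta_\beta$; since $m$ is stationary, $\int h(\beta)\,dm(\beta)=\int u(\omega)\,d\mathbb{P}(\omega)$.

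With this disintegration in hand, the left-hand side becomes
\[
\int u(\sigma(\omega))\,g(\omega_0)\,d\mathbb{P}(\omega)
=\int\!\!\int g(\alpha)\,h(\beta)\,p(\alpha,d\beta)\,dm(\alpha),
\]
and I would now apply the duality identity \eqref{permuta} (which the excerpt already records) to the function $\kappa(\alpha,\beta)=g(\alpha)h(\beta)$, obtaining
\[
\int\!\!\int g(\alpha)\,h(\beta)\,p(\alpha,d\beta)\,dm(\alpha)
=\int\!\!\int g(\beta)\,h(\alpha)\,q(\alpha,d\beta)\,dm(\alpha)
=\int h(\alpha)\Bigl(\int g(\beta)\,q(\alpha,d\beta)\Bigr)dm(\alpha).
\]
Finally I would reinterpret the outer integral: since $h(\alpha)=\mathbb{E}_{\mathbb{P}_\alpha}[u]$ is exactly the conditional expectation of $u(\omega)$ given $\omega_0=\alpha$, and $\int g(\beta)\,q(\alpha,d\beta)$ depends only on $\omega_0=\alpha$, the last expression equals $\int\bigl(u(\omega)\int g(\beta)\,q(\omega_0,d\beta)\bigr)\,d\mathbb{P}(\omega)$, which is the right-hand side. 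Throughout, one first proves the identity for $u=\mathds{1}_{C}$ with $C$ a cylinder set and $g=\mathds{1}_{A}$, then invokes the monotone class theorem to pass to arbitrary bounded measurable $u,g$, and monotone convergence to reach nonnegative measurable functions.

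The main obstacle I expect is the careful bookkeeping of the Markov disintegration: making rigorous the claim that $\mathbb{E}_{\mathbb{P}}[u(\sigma(\omega))\mid\omega_0,\omega_1]$ depends only on $\omega_1$ and equals $\mathbb{E}_{\mathbb{P}_{\omega_1}}[u]$, and that this identity interacts correctly with integration against $p(\alpha,d\beta)\,dm(\alpha)$. This is exactly the Markov property of $\mathbb{P}$ together with stationarity of $m$, so it is standard (see \cite{Revuz}), but it is the one place where one must be precise about which $\sigma$-algebra one is conditioning on and about the measurability of $\alpha\mapsto h(\alpha)$ and $\alpha\mapsto\int g(\beta)\,q(\alpha,d\beta)$; once that disintegration is set up, the rest is a direct application of \eqref{permuta} and needs no new ideas.
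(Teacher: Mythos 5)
Your proposal is correct and follows essentially the same path as the paper's proof: apply the Markov property to replace $u(\sigma(\omega))$ by $\mathbb{E}_{\mathbb{P}_{\omega_1}}[u]$, reduce the left-hand side to an integral against $p(\alpha,d\beta)\,dm(\alpha)$, apply the duality identity \eqref{permuta} to $\kappa(\alpha,\beta)=g(\alpha)h(\beta)$, and then reassemble via $\mathbb{P}=\int\mathbb{P}_\alpha\,dm(\alpha)$. The final reinterpretation step could be spelled out a touch more explicitly (it is the tower property combined with the decomposition $\mathbb{P}=\int\mathbb{P}_\alpha\,dm(\alpha)$, which is what the paper writes out), but the argument is complete and correct.
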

\begin{proof} We start by recalling the Markov property  (see Revuz \cite[Proposition $2.13$]{Revuz}). Let $\mathbb{P}_{\alpha}$ be the Markov measure associated with $(p,\delta_{\alpha})$, where $\delta_{\alpha}$ is the Dirac measure centred on $\alpha\in E$. Then, the Markov property says that 
\begin{equation}\label{mp}
\int g(\omega_{0}) u(\sigma(\omega))\, d\mathbb{P}(\omega)
=\int g(\omega_{0})\mathbb{E}_{\omega_{1}}(u)\,d\mathbb{P}(\omega),
\end{equation}
where $\mathbb{E}_{\alpha}$ denotes the expectation with respect the Markov measure
$\mathbb{P}_{\alpha}$.

Using \eqref{mp}  and the definition of $\mathbb{P}$ we have 

\[
\begin{split}
\int g(\omega_{0}) u(\sigma(\omega))\, d\mathbb{P}(\omega)
&=\int g(\omega_{0}) 
\left(\int u(\vartheta)\, d\mathbb{P}_{\omega_{1}}(\vartheta) \right)\, d\mathbb{P}(\omega)\\
&=\iint g(\omega_{0}) 
\left(\int u(\vartheta)\, d\mathbb{P}_{\omega_{1}}(\vartheta) \right) p(\omega_{0},d\omega_{1})\, dm(\omega_{0}).
\end{split}
\]
 Applying  \eqref{permuta} to the right term of the last equality above we obtain
\[
\begin{split}
\int g(\omega_{0}) u(\sigma(\omega))\, d\mathbb{P}(\omega)
&=\iint
\left(\int g(\omega_{1})  u(\vartheta)\, d\mathbb{P}_{\omega_{0}}(\vartheta) \right) q(\omega_{0},d\omega_{1})\, dm(\omega_{0})\\
&=\int \int \int g(\omega_{1})  u(\vartheta)\, q(\omega_{0},d\omega_{1})\,
 d\mathbb{P}_{\omega_{0}}(\vartheta)\,\, dm(\omega_{0}),
\end{split}
\] 
where the second equality  follows from Fubini's theorem.

 We recall that every Markov measure associated with the transition probability $p$ can be obtained by the  family of Markov measures  $\{\mathbb{P}_{\alpha}\}_{\alpha\in E}$. That is, if $\mathbb{P}$ a is Markov measure induced by $(p,m)$, then
 $$
 \mathbb{P}(C)=\int \mathbb{P}_{\alpha}(C)\, dm(\alpha)\qquad \mbox{for every}\,\,\, C\in \mathscr{F}.
 $$ 
In particular, this implies that 
\[
\begin{split}
\int g(\omega_{0}) u(\sigma(\omega))\, d\mathbb{P}(\omega)&=
\int \int u(\vartheta)\left(\int g(\omega_{1})\, q(\omega_{0},d\omega_{1})  \right) d\mathbb{P}_{\omega_{0}}(\vartheta)\,\, dm(\omega_{0})\\
 &=\int\left( u(\omega) \int g(\beta)\, q(\omega_{0},d\beta)\right)\, d\mathbb{P}(\omega).
\end{split}
\]
  
%&=\int \int u(\vartheta)\left(\int \mathds{1}_{A}(\omega_{1})\, q(\omega_{0},d\omega_{1})  \right) d\mathbb{P}_{\omega_{0}}(\vartheta)\,\, dm(\omega_{0}) 
%   
  
%   
%\begin{equation}\label{verificartb}
%\int \hat u(\vartheta)\hat g(\vartheta_{0})\, d\mathbb{P}(\vartheta)=\iint \hat u(\vartheta)\hat g(\omega_{0})\, d\mathbb{P}_{\omega_{0}}(\vartheta)\, dm(\omega_{0}). 
%\end{equation}

The proof of the lemma is now complete.

\end{proof}

To conclude the proof of the proposition, we apply Lemma \ref{olharaprova} in \eqref{delta1} to obtain 
\[
\begin{split}
F_{*}\hat \mu(A\times B)&=\int_{A}\int
f_{\beta*}\mu_{\beta}(B)\,q(\omega_{0},d\beta)\, d\mathbb{P}(\omega), 
\end{split}
\]
which implies that 
$$
(F_{*}\hat \mu)_{\omega}=\int
f_{\beta*}\mu_{\beta}\,q(\omega_{0},d\beta)
$$
 for $\mathbb{P}$-almost every $\omega$. Therefore,  $\hat \mu$ is $F$-invariant if and only if 
 $$
 \mu_{\omega_{0}}=\int
f_{\beta*}\mu_{\beta}\,q(\omega_{0},d\beta)
 $$ 
 for $\mathbb{P}$-almost every $\omega$, or equivalently, for $m$-almost every $\omega_{0}$.
\end{proof}

\subsection{Proof of Theorem \ref{one}}

Let $\Theta$ be the map that associates to each $\hat \mu\in I_{0}(\varphi)$ the probability measure on $E\times M$ given by
$$
\Theta(\hat\mu)=\nu, \qquad \mbox{where} \quad \nu_{\alpha}=
f_{\alpha*}\mu_{\alpha}.
$$
Let $\Xi$ be the map that associates to each  $\nu\in S(\varphi)$ the probability measure on $\Sigma\times M$  given by
$$
\Xi(\nu)=\hat \mu, \qquad\mbox{where}\quad \hat \mu_{\omega}=\int \nu_{\beta}\, q(\omega_{0},d\beta).
$$
We claim that 
 $\Theta$ takes  $I_{0}(\varphi)$ into  
$S(\varphi)$ and $\Xi$ takes $S(\varphi)$ into $I_{0}(\varphi)$. We start by proving
the first claim. To this end, let $\hat\mu\in I_{0}(\varphi)$. It follows from the definition of $\nu=\Theta(\hat \mu)$  that, for every $\alpha$ and $\beta$, we have   
$$
f_{\alpha*}\nu_{\beta}=f_{\alpha*}f_{\beta*}\mu_{\beta}.
$$
 Now, recall from Proposition \ref{stati} 
that $\mu_{\alpha}=\int f_{\beta*}\mu_{\beta}\, q(\alpha, d\beta)$ for $m$-almost every $\alpha$. Hence
\[ 
\begin{split}
\int f_{\alpha*}\nu_{\beta}\, q(\alpha,\beta)&=\int f_{\alpha*}f_{\beta*}\mu_{\beta}\, q(\alpha,d\beta)
\\
&=f_{\alpha*}\int f_{\beta*}\mu_{\beta}\, q(\alpha,d\beta)=f_{\alpha*}\mu_{\alpha}=\nu_{\alpha}
\end{split}
\]
for $m$-almost every $\alpha$. We conclude then, from Proposition \ref{invariant}, that $\nu$ is a stationary measure. 

We now prove that $\Xi(S(\varphi))\subset I_{0}(\varphi)$. To see this, take $\nu\in S(\varphi)$ and let $\{\mu_{\beta}\colon\beta\in E\}$ be the family of probability measures defined by 
\begin{equation}\label{jtnoarx}
\mu_{\beta}=\int \nu_{\alpha} \,q(\beta,d\alpha).
\end{equation}
By definition,  $\hat \mu =\Xi(\nu)$ is the unique probability measure on $\Sigma\times M$ whose disintegration $\{\hat \mu_{\omega}\colon \omega\in \Sigma\}$ with respect to $\mathbb{P}$ is given by $\hat \mu_{\omega}=\mu_{\omega_{0}}$.

 It follows from Proposition \ref{invariant} that
\begin{equation}\label{jtnoarx2}
f_{\beta*}\mu_{\beta}=f_{\beta*}\left(\int \nu_{\alpha}\, q(\beta,d\alpha)\right)
=\int f_{\beta*}\nu_{\alpha}\, q(\beta,d\alpha)=\nu_{\beta}
\end{equation}
for $m$-almost every $\beta$.
Combining \eqref{jtnoarx} with \eqref{jtnoarx2}, we obtain
$$
\int f_{\beta*}\mu_{\beta}\, q(\alpha,d\beta)=\int \nu_{\beta}\, q(\alpha,d\beta)
=\mu_{\alpha}. 
$$
Therefore, Proposition \ref{stati} implies that $\hat \mu$ is  a $\varphi$-invariant measure
whose disintegration depends only on the zeroth coordinate.

We now conclude the proof of Theorem \ref{one}. Note that to this end, it is sufficient to prove that $\Theta\circ \Xi(\nu)=\nu$ for every $\nu \in S(\varphi)$ and $\Xi\circ \Theta(\hat\mu)=\hat \mu$ for every $\hat \mu\in I_{0}(\varphi)$.

We start by proving that $\Theta\circ \Xi(\nu)=\nu$. Let $\hat \mu=\Xi(\nu)$ and consider the family $\{\mu_{\alpha}\colon\alpha\in E\}$ of probability measures defined by 
$$
\mu_{\alpha}=\int \nu_{\beta} \,q(\alpha,d\beta).
$$ 
By definition, $\Theta(\hat \mu)$ is the probability measure $\nu'$ whose disintegration
with respect to $m$ is given by 
$
\nu'_{\alpha}=f_{\alpha*}\mu_{\alpha}.
$  
Then, we have
$$
\nu'_{\alpha}=f_{\alpha*}\mu_{\alpha}=f_{\alpha*}\int \nu_{\beta} \,q(\alpha,d\beta)=
\int f_{\alpha*} \nu_{\beta} \,q(\alpha,d\beta)=\nu_{\alpha}
$$
for $m$-almost every $\alpha$, where in the last equality we use Proposition \ref{invariant}.  Thus we conclude that $\nu=\nu'$, which means that $\Theta\circ \Xi(\nu)=\nu$.

We now prove that $\Xi\circ \Theta(\hat\mu)=\hat \mu$. 
Let $\{\mu_{\alpha}\colon \alpha\in E\}$ be the family of probability measures for which  
$\hat \mu_{\omega}=\mu_{\omega_{0}}$ for $\mathbb{P}$-almost every $\omega$. Then, 
by definition  the stationary measure $\nu=\Theta(\hat \mu)$  is given by
 $\nu_{\alpha}=f_{\alpha*}\mu_{\alpha}$. Hence, the $\varphi$-invariant measure $\hat \mu'= \Xi(\nu)$ is given by 
 $$
 \hat \mu'_{\omega}=\int \nu_{\alpha}\,q(\omega_{0},d\alpha)=
 \int f_{\alpha*}\mu_{\alpha}\, q(\omega_{0},d\alpha)=\mu_{\omega_{0}},
 $$
 where is the last equality we use Proposition \ref{stati}.
 This implies that $\Xi\circ \Theta(\hat\mu)=\hat \mu$.

  The proof of the theorem  is now complete.

  \hfill \qed

\section{Proof of Theorem \ref{ergodic}}\label{sectionergodic}
We start by proving a technical result on bounded Markovian random iterations.
Let $(M,\mathscr{B})$ and $(E,\mathscr{E})$ be standard measurable spaces, $f\colon E\times M\to M$ be a measurable map and $p$ be  a transition probability on $E$ with a unique stationary measure $m$. In what follows the pair $(p,m)$ is bounded, that is,  $p_{\alpha}$ is absolutely continuous with respect to $m$ for every $\alpha$ and
there is a constant  $C$ with $0<C\leq 1$ such that 
\begin{equation}\label{faci}
C\leq \frac{dp_{\alpha}}{dm}\leq C^{-1}\qquad \mbox{for every}\,\,\, \alpha.
 \end{equation} 
  Throughout this section $C$ denotes a constant satisfying condition  \eqref{faci}. We denote by $\mathscr{F}_{n}$  the $\sigma$-algebra on $\Sigma$ generated by the canonical projections 
$\omega\mapsto\omega_{i}$, $i=0,\dots, n$. The following lemma is a general result on Markovian random iterations, also needed in the proof of Theorem \ref{teocontraction}.
In what follows, $\mathbb{E}(\cdot|\cdot)$ denotes the conditional expectation of a measurable map with respect to a $\sigma$-algebra and the Markov measure $\mathbb{P}$.
\begin{lemma}\label{difmarkocase}Let $F$ be the skew product induced by the map $f$.
Let $h\colon \Sigma\to [0,\infty)$ be a measurable map such that
$h(\omega,x)\geq h(F(\omega,x))$ for every $(\omega,x)\in \Sigma\times M$. 
Let $\bar h\colon M\to [0,\infty]$ be the map defined by
$$
\bar h(x)=\int h(\omega,x)\, d\mathbb{P}(\omega).
$$
 Then for every $n\geq 1$ and $x_{0}\in M$ we have
$$
\mathbb{E}(h(F^{n}(\cdot, x_{0}))|\mathscr{F}_{n-1}) \geq C \bar h(f_{\omega}^{n}(x_{0}))
$$
for $\mathbb{P}$-almost every $\omega$.

\end{lemma}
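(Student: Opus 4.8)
The plan is to compute the conditional expectation $\mathbb{E}(h(F^{n}(\cdot,x_{0}))|\mathscr{F}_{n-1})$ explicitly by means of the Markov property, and then to bound it from below using the boundedness hypothesis \eqref{faci}. Since $F^{n}(\omega,x_{0})=(\sigma^{n}\omega,f_{\omega}^{n}(x_{0}))$ and the map $\omega\mapsto f_{\omega}^{n}(x_{0})$ depends only on $\omega_{0},\dots,\omega_{n-1}$, the second coordinate is $\mathscr{F}_{n-1}$-measurable; moreover, by Tonelli the map $x\mapsto\bar h(x)$ is measurable, so $\omega\mapsto\bar h(f_{\omega}^{n}(x_{0}))$ is $\mathscr{F}_{n-1}$-measurable as well. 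Hence it is enough to establish the identity for the conditional expectation below and then estimate it.

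The first step is the standard fact that, conditionally on $\mathscr{F}_{n-1}$, the ``future'' $\sigma^{n}\omega$ is distributed according to the Markov measure with initial law $p_{\omega_{n-1}}$: for every measurable $v\colon\Sigma\to[0,\infty)$,
\[
\mathbb{E}(v(\sigma^{n}\cdot)|\mathscr{F}_{n-1})(\omega)=\int\left(\int v\,d\mathbb{P}_{\beta}\right)p_{\omega_{n-1}}(d\beta)\qquad\text{for }\mathbb{P}\text{-a.e.\ }\omega .
\]
This follows from the homogeneous Markov property (as recalled around \eqref{mp}), the one-step identity $\mathbb{E}(w(\omega_{n})|\mathscr{F}_{n-1})(\omega)=\int w(\beta)\,p(\omega_{n-1},d\beta)$, and the tower property. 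Since the second coordinate $f_{\omega}^{n}(x_{0})$ of $F^{n}(\omega,x_{0})$ is $\mathscr{F}_{n-1}$-measurable, ``freezing'' it inside the conditional expectation then yields
\[
\mathbb{E}(h(F^{n}(\cdot,x_{0}))|\mathscr{F}_{n-1})(\omega)=\int\left(\int h(\vartheta,f_{\omega}^{n}(x_{0}))\,d\mathbb{P}_{\beta}(\vartheta)\right)p_{\omega_{n-1}}(d\beta)\qquad\text{for }\mathbb{P}\text{-a.e.\ }\omega .
\]

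With this identity in hand, the conclusion is immediate from \eqref{faci}. Since $p_{\omega_{n-1}}$ is absolutely continuous with respect to $m$ with $\frac{dp_{\omega_{n-1}}}{dm}\geq C$, and since $\mathbb{P}=\int\mathbb{P}_{\alpha}\,dm(\alpha)$, for every measurable $V\colon\Sigma\to[0,\infty)$ one has
\[
\int\left(\int V\,d\mathbb{P}_{\beta}\right)p_{\omega_{n-1}}(d\beta)=\int\left(\int V\,d\mathbb{P}_{\beta}\right)\frac{dp_{\omega_{n-1}}}{dm}(\beta)\,dm(\beta)\ \geq\ C\int\left(\int V\,d\mathbb{P}_{\beta}\right)dm(\beta)=C\int V\,d\mathbb{P}.
\]
Applying this with $V(\vartheta)=h(\vartheta,f_{\omega}^{n}(x_{0}))$ gives $\mathbb{E}(h(F^{n}(\cdot,x_{0}))|\mathscr{F}_{n-1})(\omega)\geq C\int h(\vartheta,f_{\omega}^{n}(x_{0}))\,d\mathbb{P}(\vartheta)=C\,\bar h(f_{\omega}^{n}(x_{0}))$ for $\mathbb{P}$-a.e.\ $\omega$, which is the assertion.

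The step I expect to require the most care is the ``freezing'' of the $\mathscr{F}_{n-1}$-measurable random point $f_{\omega}^{n}(x_{0})$ inside the conditional-law integral. I would make it rigorous by a monotone-class argument: the identity is immediate when $h$ has product form $h(\omega,x)=a(\omega)b(x)$, since then $b(f_{\omega}^{n}(x_{0}))$ is $\mathscr{F}_{n-1}$-measurable and factors out of the conditional expectation; it then extends to indicators of measurable rectangles, to all of $\mathscr{F}\otimes\mathscr{B}$ by a Dynkin-system argument (the product $\sigma$-algebra of the standard measurable spaces $\Sigma$ and $M$ being generated by the measurable rectangles), to nonnegative simple functions by linearity, and to arbitrary nonnegative measurable $h$ by monotone convergence. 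The remaining steps are routine applications of the Markov property and of \eqref{faci}.
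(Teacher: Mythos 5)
Your proof is correct and uses essentially the same ingredients as the paper: the Markov property to push the conditional expectation through $\sigma^{n}$, the boundedness hypothesis $\frac{dp_{\alpha}}{dm}\geq C$ together with $\mathbb{P}=\int\mathbb{P}_{\alpha}\,dm(\alpha)$ to pass from $p_{\omega_{n-1}}$ to $\mathbb{P}$, and a monotone-class argument to go from rectangles to general nonnegative measurable $h$. The only cosmetic difference is one of bookkeeping: you first derive an exact disintegration formula for $\mathbb{E}(h(F^{n}(\cdot,x_{0}))\,|\,\mathscr{F}_{n-1})$ and then estimate it, whereas the paper works directly with the defining integral identity over $D\in\mathscr{F}_{n-1}$ and establishes the inequality for indicators of rectangles before running the monotone-class extension; this is the same argument in a slightly different order and buys nothing additional.
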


\begin{proof}
By the definition of conditional expectations, we need to prove that for every measurable set $D\in \mathscr{F}_{n-1}$
\begin{equation}\label{nonintegrable}
\int_{D}h(\sigma^{n}(\omega),f_{\omega}(x_{0}))\,d\mathbb{P}(\omega)\geq\int_{D}\int h(\vartheta,
f_{\omega}^{n}(x_{0}))\, d\mathbb{P}(\vartheta)\, d\mathbb{P}(\omega).
\end{equation}
We start by proving that \eqref{nonintegrable} holds  for every characteristic function $h=
\mathds{1}_{A\times B}$, where  $A\times B\in \mathscr{F}\otimes \mathscr{B}$.
Note that $\mathds{1}_{A\times B}(\sigma^{n}(\omega),f_{\omega}(x_{0}))=
\mathds{1}_{A}(\sigma^{n}(\omega))\mathds{1}_{B}(f_{\omega}(x_{0}))$. Hence, we have 
\begin{equation}\label{cad1}
\int_{D}\mathds{1}_{A\times B}(\sigma^{n}(\omega),f_{\omega}(x_{0}))\,d\mathbb{P}(\omega)
=\int \mathds{1}_{A}(\sigma^{n}(\omega))\mathds{1}_{B}(f_{\omega}(x_{0}))
 \mathds{1}_{D}(\omega)\, d\mathbb{P}(\omega).
\end{equation}
Recall that if $u\colon \Sigma\to [0,\infty)$ is a measurable map and $g\colon  \Sigma\to [0,\infty)$ is a $\mathscr{F}_{n}$-measurable map, then the Markov property (see \cite[Proposition $2.13$]{Revuz}) implies that 
\begin{equation}\label{mpg}
\int (u\circ \sigma^{n})g\, d\mathbb{P}=\int \mathbb{E}_{\omega_{n}}(u) g(\omega)\, d\mathbb{P}(\omega).
\end{equation} 
Applying  \eqref{mpg} on the right hand side of \eqref{cad1} with $u=\mathds{1}_{A}$ and $g(\omega)=\mathds{1}_{B}(f_{\omega}(x_{0}))
 \mathds{1}_{D}(\omega)$ ($g$ is $\mathscr{F}_{n-1}$-measurable, and in particular, 
 it is $\mathscr{F}_{n}$-measurable), we obtain
 $$
 \int_{D}\mathds{1}_{A\times B}(\sigma^{n}(\omega),f_{\omega}(x_{0}))\,d\mathbb{P}(\omega)=
 \int \mathbb{E}_{\omega_{n}}(\mathds{1}_{A})\mathds{1}_{B}(f_{\omega}(x_{0}))
 \mathds{1}_{D}(\omega)\, d\mathbb{P}(\omega)\eqdef L.
 $$
 Since the mapping $\omega\mapsto \mathbb{E}_{\omega_{n}}(\mathds{1}_{A})\mathds{1}_{B}(f_{\omega}(x_{0}))
 \mathds{1}_{D}(\omega)$ is $\mathscr{F}_{n}$-measurable, it follows from the definition of the Markov measure
 $\mathbb{P}$ that
 $$
 L=
 \int \left(\int \mathbb{E}_{\omega_{n}}(\mathds{1}_{A})\mathds{1}_{B}(f_{\omega}(x_{0}))
 \mathds{1}_{D}(\omega) p(\omega_{n-1},d\omega_{n})\right) d\,\mathbb{P}(\omega).
 $$ 
 Since the mapping $\omega\mapsto \mathds{1}_{B}(f_{\omega}(x_{0}))
 \mathds{1}_{D}(\omega) $ does not depend on $\omega_{n}$, we have 
 $$
  L=\int \left(\mathds{1}_{B}(f_{\omega}(x_{0}))
 \mathds{1}_{D}(\omega) \int \mathbb{E}_{\omega_{n}}(\mathds{1}_{A})p(\omega_{n-1},d\omega_{n})\right) d\,\mathbb{P}(\omega).
 $$
Recalling that by hypothesis 
 $$
 C\leq \frac{dp_{\alpha}}{dm}\leq C^{-1}
 $$ for every $\alpha$, we conclude
 \[
\begin{split}
 L&\geq
 C\int  \left(\mathds{1}_{B}(f_{\omega}(x_{0}))
 \mathds{1}_{D}(\omega) \int \mathbb{E}_{\omega_{n}}(\mathds{1}_{A})
 \,dm(\omega_{n})\right) \,d\mathbb{P}(\omega)\\
 &=C\int  \mathds{1}_{B}(f_{\omega}(x_{0}))
 \mathds{1}_{D}(\omega) \mathbb{E}(\mathds{1}_{A})\, d\mathbb{P}(\omega)\\
 &=C\int \mathds{1}_{B}(f_{\omega}(x_{0}))
 \mathds{1}_{D}(\omega) \left(\int \mathds{1}_{A}(\vartheta)\, d\mathbb{P}(\vartheta)\right)\,d\mathbb{P}(\omega) \\
 &=C\int_{D}\int \mathds{1}_{A\times B}(\vartheta,f_{\omega}^{n}(x_{0}))\, d\mathbb{P}(\vartheta)\,d\mathbb{P}(\omega)=C\int_{D}\bar{h}(f^{n}_{\omega}(x_{0}))\,d\mathbb{P}(\omega).
\end{split}
\] 
This shows that \eqref{nonintegrable} holds for $h=\mathds{1}_{A\times B}$. 

  We now observe that if $R$ is an element of the algebra generated by the measurable rectangles, then the map $h=\mathds{1}_{R}$ satisfies \eqref{nonintegrable}.
   It is readily checked that the class  of measurable sets $R$ of $\Sigma\times M$ for which the map $h=\mathds{1}_{R}$ satisfies \eqref{nonintegrable} is a monotone class. Since the class of measurable rectangles generates 
the product $\sigma$-algebra of $\Sigma\times M$, it follows from  the monotone class 
theorem that if $R$ is a subset of $\mathscr{F}\otimes \mathscr{B}$, then the map $h=\mathds{1}_{R}$ satisfies \eqref{nonintegrable}. Now, by standard arguments of measure theory, it is easily verified that  every non-negative measurable map $\zeta$ satisfies \eqref{nonintegrable}.
This completes the proof of the  lemma.
\end{proof}

We also need  the following lemma. Let $\Pi_{2}\colon E\times M\to M$ be the projection on the second factor given by $\Pi_{2}(\alpha,x)=x$.

\begin{lemma}\label{equivalente} Let $\varphi$ be a bounded Markovian random iteration and let $\Xi$ be the bijection of Theorem \ref{one}. Given $\nu\in S(\varphi)$, let $\hat \mu$ denote $\Xi(\nu)$. Then, 
$$
C\hat \mu\leq \mathbb{P}\times \Pi_{2*}\nu\leq C^{-1} \hat \mu.
$$
\end{lemma}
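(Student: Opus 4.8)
The plan is to prove the two–sided estimate first on measurable rectangles $C\times B$ (with $C\in\mathscr{F}$, $B\in\mathscr{B}$) and then extend it to all of $\mathscr{F}\otimes\mathscr{B}$ by a monotone class argument, exactly as in the proof of Lemma \ref{difmarkocase}: the collection of measurable sets $D$ satisfying $C\hat\mu(D)\le(\mathbb{P}\times\Pi_{2*}\nu)(D)\le C^{-1}\hat\mu(D)$ contains the algebra of finite disjoint unions of measurable rectangles (by additivity of all three measures) and is a monotone class (all three measures being finite), hence it contains the product $\sigma$-algebra.

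For a fixed rectangle $C\times B$, I would start from the disintegration of $\hat\mu=\Xi(\nu)$ with respect to its first marginal $\mathbb{P}$, namely $\hat\mu_\omega=\int\nu_\beta\,q(\omega_0,d\beta)$, together with $\Pi_{2*}\nu=\int\nu_\alpha\,dm(\alpha)$ (which is \eqref{disintegralogo} with $A=E$), to write
$$
\hat\mu(C\times B)=\int_C\Big(\int\nu_\beta(B)\,q(\omega_0,d\beta)\Big)\,d\mathbb{P}(\omega).
$$
The next step is to turn $\mathds{1}_C$ into a function of $\omega_0$ alone: choose a measurable $\phi\colon E\to[0,1]$ with $\phi(\omega_0)=\mathbb{E}(\mathds{1}_C\mid\mathscr{F}_0)(\omega)$ (possible since $\mathscr{F}_0$ is generated by $\omega\mapsto\omega_0$). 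Since the inner integral above is $\mathscr{F}_0$-measurable, pulling it through the conditional expectation and using that the law of $\omega_0$ under $\mathbb{P}$ is $m$ gives
$$
\hat\mu(C\times B)=\int\int\phi(\alpha)\,\nu_\beta(B)\,q(\alpha,d\beta)\,dm(\alpha).
$$

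Now I would apply the duality identity \eqref{permuta} with $\kappa(\alpha,\beta)=\phi(\beta)\,\nu_\alpha(B)$ to exchange $q$ for $p$, obtaining
$$
\hat\mu(C\times B)=\int\nu_\alpha(B)\Big(\int\phi(\beta)\,p(\alpha,d\beta)\Big)\,dm(\alpha).
$$
Finally, boundedness enters: for every $\alpha$ one has $\int\phi(\beta)\,p(\alpha,d\beta)=\int\phi\,\frac{dp_\alpha}{dm}\,dm$, which lies between $C\int\phi\,dm$ and $C^{-1}\int\phi\,dm$ because $\phi\ge0$ and $C\le\frac{dp_\alpha}{dm}\le C^{-1}$; and $\int\phi\,dm=\int\mathbb{E}(\mathds{1}_C\mid\mathscr{F}_0)\,d\mathbb{P}=\mathbb{P}(C)$. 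Since $\nu_\alpha(B)\ge0$, plugging these bounds into the last display yields $C\,\mathbb{P}(C)\,\Pi_{2*}\nu(B)\le\hat\mu(C\times B)\le C^{-1}\mathbb{P}(C)\,\Pi_{2*}\nu(B)$, that is, $C\hat\mu(C\times B)\le(\mathbb{P}\times\Pi_{2*}\nu)(C\times B)\le C^{-1}\hat\mu(C\times B)$; the monotone class step then finishes the proof.

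The computations are routine; I expect the only points requiring care to be the correct orientation of the two $E$-variables when invoking \eqref{permuta} and the handling of $\mathbb{E}(\mathds{1}_C\mid\mathscr{F}_0)$. One could instead first check that the dual $q$ is again bounded, with $\frac{dq_\alpha}{dm}(\beta)=\frac{dp_\beta}{dm}(\alpha)$ following from the duality relation, and then compare the disintegrations $\hat\mu_\omega=\mu_{\omega_0}$ and $(\mathbb{P}\times\Pi_{2*}\nu)_\omega=\Pi_{2*}\nu$ directly through $\mu_\alpha(B)=\int\nu_\beta(B)\,\frac{dq_\alpha}{dm}(\beta)\,dm(\beta)$; the route above has the advantage of not manipulating $q$ at all.
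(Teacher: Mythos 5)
Your proof is correct, but it takes a genuinely different route from the paper's. The paper begins by deriving an explicit formula for $q$ from the duality relation, $q(\beta,A)=\int_A\frac{dp_\alpha}{dm}(\beta)\,dm(\alpha)$, and concludes that the dual transition probability is itself bounded, $C\le\frac{dq_\alpha}{dm}\le C^{-1}$; it then compares the disintegrations fiberwise, writing $\Pi_{2*}\nu=\int\nu_\beta\,dm(\beta)=\int\nu_\beta\,\frac{dm}{dq_{\omega_0}}(\beta)\,q(\omega_0,d\beta)$ and bounding the Radon--Nikodym factor to obtain $C\mu_{\omega_0}\le\Pi_{2*}\nu\le C^{-1}\mu_{\omega_0}$ for $m$-a.e.\ $\omega_0$. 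That pointwise inequality between the fiber measures yields the inequality of the global measures on all of $\mathscr{F}\otimes\mathscr{B}$ at once, so no monotone class step is actually needed. Your argument instead works on rectangles, reduces the $\Sigma$-variable to $\omega_0$ by conditioning on $\mathscr{F}_0$, and invokes the integral duality identity \eqref{permuta} to trade $q$ for $p$, bounding with the density of $p$ directly; this avoids ever computing the dual density, at the price of the conditional-expectation reduction and an explicit monotone class extension. Both routes are valid, and the alternative you sketch in your closing remark is in fact the paper's argument.
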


\begin{proof}
We start by presenting a formula for the transition probability $q$. 
Recall that for every $\alpha\in E$, we have
$$
C\leq \frac{dp_{\alpha}}{dm}\leq C^{-1}.
$$
Set $k(\alpha,\beta)\eqdef \frac{dp_{\alpha}}{dm}(\beta)$. We claim that 
$q$ is given by 
\begin{equation}\label{cdual}
q(\beta,A)=\int_{A} k(\alpha,\beta)\, dm(\alpha).
\end{equation}
Indeed, 
\[
\begin{split}
\int_{A} q(\beta,B)\,dm(\beta)&=\int_{B}p(\alpha,A)\, dm(\alpha)\\
&=\int_{B}\left(\int_{A}\frac{dp_{\alpha}}{dm}(\beta)\,dm(\beta)\right) dm(\alpha)\\
&=\int_{A}\left(\int_{B}\frac{dp_{\alpha}}{dm}(\beta)\,dm(\alpha)\right) dm(\beta)
=\int_{A}\left(\int_{B}k(\alpha,\beta)\,dm(\alpha)\right)\, dm(\beta).
\end{split}
\]
This proves our claim.

Then, it follows from the characterization of $q$ in \eqref{cdual} that 
$$
C\leq \frac{dq_{\alpha}}{dm}\leq C^{-1}
$$
 for $m$-almost every $\alpha$, which implies that 
  $$
  C\leq \frac{dm}{dq_{\alpha}}\leq C^{-1}
  $$
   for $m$-almost every $\alpha$. Next, it is easily seen from the definition of the disintegration of measures that  
$\Pi_{2*}\nu=\int \nu_{\beta}\, dm(\beta)$. Hence,  for $\mathbb{P}$-almost every $\omega$  we have
$$
\Pi_{2*}\nu=\int \nu_{\beta}\frac{dm}{dq_{\omega_{0}}}\, q(\omega_{0},d\beta)\leq C^{-1}\int 
\nu_{\beta}\, q(\omega_{0},d\beta)=C^{-1}\mu_{\omega_{0}}
$$
and 
$$
\Pi_{2*}\nu=\int \nu_{\beta}\frac{dm}{dq_{\omega_{0}}}\, q(\omega_{0},d\beta)\geq C\int 
\nu_{\beta}\, q(\omega_{0},d\beta)=C \mu_{\omega_{0}}.
$$
In particular, for every measurable
rectangle $A\times B\in \mathscr{F}\otimes \mathscr{B}$, we have 
$$
\mathbb{P}\times \Pi_{2*}\nu(A\times B)=\int_{A}\Pi_{2*}\nu(B)\, d\mathbb{P}(\omega)\leq C^{-1}\int_{A} \mu_{\omega_{0}}(B)\, d\mathbb{P}(\omega)=C^{-1}\hat \mu(A\times B),
$$
and 
$$
\mathbb{P}\times \Pi_{2*}\nu(A\times B)=\int_{A}\Pi_{2*}\nu(B)\, d\mathbb{P}(\omega)\geq C\int_{A}  \mu_{\omega_{0}}(B)\, d\mathbb{P}(\omega)=C\hat \mu(A\times B),
$$
which is the desired result.
 
\end{proof}

\subsection{Proof of Theorem \ref{ergodic}}
Let $\Xi\colon S(\varphi)\to I_{0}(\varphi)$ be the bijection of Theorem \ref{one}. We need to prove that a stationary measure $\nu$ is ergodic if and only if the $\varphi$-invariant measure $\Xi(\nu)$ is ergodic.

We first assume that $\nu$ is ergodic. Let $\hat \mu$ 
denote $\Xi(\nu)$. To prove that $\hat \mu$ is ergodic, let $A$ be an $F$-invariant set (that is, $F^{-1}(A)=A$) and assume $\hat \mu(A)>0$. We claim that $\hat \mu(A)=1$. To prove this,  define 
 $h\colon \Sigma\times M\to \mathbb{R}$ by $h(\omega,x)=\mathds{1}_{A}(\omega,x)$ and note that for every $(\omega,x)\in \Sigma\times M$ we have
\begin{equation}\label{in}
F(h(\omega,x))=h(\omega,x).
\end{equation}
Let $\mathscr{F}_{n}$ be  the $\sigma$-algebra generated by the canonical projections 
$\omega\mapsto\omega_{i}$, $i=0,\dots, n$.
  From 
Levy's law we have that $\zeta(\omega,x_{0})=\lim_{n\to \infty} \mathbb{E}(\zeta(\cdot,x_{0}|\mathscr{F}_{n})(\omega)$
 for every $x_{0}\in M$ and $\mathbb{P}$-almost everywhere $\omega\in \Sigma$, and so we also have the convergence of the Cesaro averages
\begin{equation}\label{cesaro12}
h(\omega,x_{0})=\lim_{k\to \infty} \frac{1}{n}\sum_{i=0}^{n-1} \mathbb{E}(h(\cdot,x_{0}|\mathscr{F}_{i})(\omega)
\end{equation}
for every $x_{0}$ and $\mathbb{P}$-almost everywhere $\omega$.

Because of \eqref{in}, we have that $h(\omega,x_{0})= h(F^{i}(\omega,x_{0}))$
 for every $(\omega,x_{0})\in \Sigma\times M$ and $i\geq 1$. This implies that for every $i$
\begin{equation}\label{finiteint2}
\mathbb{E}(h(\cdot,x_{0}|\mathscr{F}_{i-1})(\omega)=\mathbb{E}(h(F^{i}(\cdot,x_{0}))|\mathscr{F}_{i-1})(\omega)
\end{equation}
for every $x_{0}$ and $\mathbb{P}$-almost every $\omega$.
We now apply Lemma \ref{difmarkocase} to the $F$-invariant map $h$ to obtain that for every $i$
 \begin{equation}\label{markovdesi}
\mathbb{E}(h(F^{i}(\cdot, x_{0}))|\mathscr{F}_{i-1})(\omega) \geq C \bar h(f_{\omega}^{i}(x_{0}))
\end{equation}
for every $x_{0}$ and $\mathbb{P}$-almost every $\omega$, where the map $\bar h\colon M\to \mathbb{R}$ is given by 
$$
\bar h(x)=\int h(\omega,x)\, d\mathbb{P}(\omega).
$$
Thus, it follows from \eqref{cesaro12}, \eqref{finiteint2} and \eqref{markovdesi} that 
\begin{equation}\label{arxiv3}
h(\omega,x_{0})\geq C \limsup_{k\to \infty} \frac{1}{n}\sum_{i=0}^{n-1}  \bar h(f_{\omega}^{i}(x_{0}))
\end{equation}
for every $x_{0}$ and $\mathbb{P}$-almost every $\omega$.
We need the following claim, which is just a direct corollary of the Birkhoff 
ergodic theorem for Markov chains. 
\begin{claim}
For $\mathbb{P}$-almost every $\omega$, we have
$$
\lim_{n\to \infty}\frac{1}{n}\sum_{i=0}^{n-1}\bar h(f^{i}_{\omega}(x))=\int \bar h\, d\Pi_{2*}\nu
$$
for $\Pi_{2*}\nu$-almost every $x$, where $\Pi_{2}$ is the projection on the second factor of $E\times M$.
\end{claim}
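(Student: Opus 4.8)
The plan is to derive the claim from Birkhoff's ergodic theorem applied to the Markov chain $\{Z_n\}_{n\ge1}$ with $Z_n(\omega)=(\omega_{n-1},f_\omega^n(x))$. The one delicate point is that, with $x$ fixed and $\omega$ distributed according to $\mathbb{P}$, this chain is \emph{not} started from the stationary measure $\nu$ but from the law of $Z_1$; the boundedness of $\varphi$ is precisely what is needed to bridge that gap.

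First I would reduce to an ergodic average along the chain. Set $g\eqdef\bar h\circ\Pi_2\colon E\times M\to[0,\infty)$. Since here $h=\mathds{1}_A$ we have $0\le\bar h\le 1$, so $g$ is bounded, and by the change of variables formula $\int g\,d\nu=\int\bar h\,d\Pi_{2*}\nu$. Because $g(Z_i(\omega))=\bar h(f_\omega^i(x))$ for every $i\ge1$, the averages $\tfrac1n\sum_{i=0}^{n-1}\bar h(f_\omega^i(x))$ and $\tfrac1n\sum_{i=1}^{n}g(Z_i(\omega))$ share the same limit whenever one exists, the discarded term $\tfrac1n\bar h(x)$ tending to $0$. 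So it suffices to show that, for $\Pi_{2*}\nu$-a.e. $x$, the Ces\`aro averages of $g$ along $\{Z_n\}$ converge $\mathbb{P}$-almost surely to $\int g\,d\nu$.

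Next I would use the Markov structure recalled in Section~\ref{prelimi}: for each fixed $x$, under $\mathbb{P}$ the process $\{Z_n\}_{n\ge1}$ is a Markov chain with transition probability $\hat p$ and initial distribution $\mu_1^x\eqdef\int\delta_{(\alpha,f_\alpha(x))}\,dm(\alpha)$, the law of $Z_1$. Since $\nu$ is an ergodic stationary measure of $\hat p$, the shift on the path space carrying the Markov measure associated with $(\hat p,\nu)$ is ergodic, so Birkhoff's theorem applied there and disintegrated over the initial coordinate produces a set $G_0\subset E\times M$ with $\nu(G_0)=1$ such that, for every probability measure $\theta$ on $E\times M$ with $\theta(G_0)=1$, the averages $\tfrac1n\sum_{i=1}^{n}g(Z_i)$ converge to $\int g\,d\nu$ almost surely for the chain with initial distribution $\theta$. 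Hence the claim is reduced to the single assertion that $\mu_1^x(G_0)=1$ for $\Pi_{2*}\nu$-a.e. $x$.

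The hard part, and the only place where boundedness is used, is this last assertion. Write $N\eqdef(E\times M)\setminus G_0$, so $\nu(N)=0$. The function $x\mapsto\mu_1^x(N)=\int\mathds{1}_N(\beta,f_\beta(x))\,dm(\beta)$ depends on $x$ alone, so integrating it against $\nu$ is the same as integrating it against $\Pi_{2*}\nu$; and since $\tfrac{dp_\alpha}{dm}\ge C$ for every $\alpha$, we have $dm(\beta)\le C^{-1}p(\alpha,d\beta)$, whence $\int\mathds{1}_N(\beta,f_\beta(x))\,dm(\beta)\le C^{-1}\!\int\mathds{1}_N(\beta,f_\beta(x))\,p(\alpha,d\beta)$ for every $\alpha$. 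Integrating this inequality against $\nu(d\alpha,dx)$ and recognizing the right-hand side, via \eqref{transition}, as $C^{-1}\mathcal{P}\nu(N)$, one gets
\[
\int_M\mu_1^x(N)\,d\Pi_{2*}\nu(x)\ \le\ C^{-1}\,\mathcal{P}\nu(N)\ =\ C^{-1}\,\nu(N)\ =\ 0,
\]
using $\mathcal{P}\nu=\nu$. Thus $\mu_1^x(N)=0$, i.e. $\mu_1^x(G_0)=1$, for $\Pi_{2*}\nu$-a.e. $x$, which is exactly what was needed; consequently the desired convergence holds $\mathbb{P}$-almost surely for $\Pi_{2*}\nu$-a.e. $x$. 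Finally, since the set $\{(\omega,x):\lim_n\tfrac1n\sum_{i=0}^{n-1}\bar h(f_\omega^i(x))=\int\bar h\,d\Pi_{2*}\nu\}$ is jointly measurable, an application of Fubini's theorem lets me exchange the two quantifiers and obtain the statement exactly as phrased in the claim.
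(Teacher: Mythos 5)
Your argument is correct; it rests on the same tool as the paper's proof of this claim, namely Birkhoff's theorem for the Markov chain with kernel $\hat p$ and ergodic stationary measure $\nu$, but it carefully handles a point the paper's proof passes over. Under $\mathbb{P}$, with $x$ fixed, the process $Z_0=(\alpha,x),Z_1,Z_2,\dots$ is \emph{not} a $\hat p$-chain started at $(\alpha,x)$, since $\omega_0\sim m$ rather than $\sim p(\alpha,\cdot)$; so Birkhoff does not directly deliver convergence for $\mathbb{P}$-a.e.\ $\omega$. You instead isolate the genuine $\hat p$-chain $\{Z_n\}_{n\ge1}$ whose initial law is that of $Z_1$, and you show that this law charges the $\nu$-full Birkhoff set $G_0$ by combining $dm\le C^{-1}p(\alpha,\cdot)$ with the stationarity identity $\mathcal{P}\nu=\nu$; this is exactly where the boundedness of $(p,m)$ enters. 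The paper's one-paragraph proof invokes Birkhoff, observes that the limit does not depend on $\alpha$, and asserts the conclusion, never making this initial-distribution step (or the role of boundedness) explicit, and your closing Fubini swap of the quantifiers is likewise spelled out. In sum, your proof supplies the rigor that the paper's terse treatment of this claim leaves implicit and correctly identifies what makes the argument go through.
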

\begin{proof}
Define  $ H\colon E\times M\to \mathbb{R}$ by 
$
H(\alpha,x)=\bar h(x).
$
Recalling that for every $(\alpha,x)$ the sequence 
\[
\begin{cases}
Z_{0}=(\alpha,x)\\
Z_{n}=(\omega_{n-1},f_{\omega}^{n}(x))\quad \mbox{for} \quad n\geq 1
\end{cases}
\]
is a Markov chain 
with transition probability $\hat p$, then it follows from the  Birkhoff ergodic theorem  for Markov chains and the fact that $\nu$ is an ergodic stationary measure of $\hat p$ that, 
for $\mathbb{P}$-almost every $\omega$,
\begin{equation}\label{doesnot}
\lim_{n\to \infty}\frac{1}{n}\sum_{i=0}^{n-1}H(\omega_{i-1},f^{i}_{\omega}(x))=\int H\, d\nu
\end{equation}
for $\nu$-almost every $(\alpha,x)$. Since \eqref{doesnot} does not depend on $\alpha$, the definition of $H$ implies that,
for $\mathbb{P}$-almost every $\omega$, 
$$
\lim_{n\to \infty}\frac{1}{n}\sum_{i=0}^{n-1}\bar h(f^{i}_{\omega}(x))=\int \phi\, d\Pi_{2*}\nu
$$
for $\Pi_{2*}\nu$-almost every $x$.
This completes the proof of the claim. 
\end{proof}
Now, it follows from \eqref{arxiv3} and the previous claim   that
$$
h(\omega,x_{0})\geq C\int \bar h\, d\Pi_{2*}\nu=C\int h\, d(\mathbb{P}\times\Pi_{2*}\nu)
$$
for $\mathbb{P}\times \Pi_{2*}\nu$-almost every $(\omega,x_{0})$.
Next, Lemma \ref{equivalente} says that $ \mathbb{P}\times \Pi_{2*}\nu\geq C\hat \mu$, which implies that probability measure $\hat \mu$ is absolutely continuous with respect to $\mathbb{P}\times \Pi_{2*}\nu$. Therefore,
\begin{equation}\label{saysthat}
h(\omega,x_{0})\geq C^{2}\int h\, d\hat \mu
\end{equation}
for $\hat \mu$-almost every $(\omega,x_{0})$. Recalling that $h=\mathds{1}_{A}$ and $\mu(A)>0$, it follows from \eqref{saysthat} that 
$$
\mathds{1}_{A}(\omega,x_{0})\geq C^{2}\mu(A)>0
$$
for $\hat \mu$-almost every $(\omega,x_{0})\in \Sigma\times M$, and then we conclude that  $h(\omega,x_{0})=1$ for 
$\hat \mu$-almost every $(\omega,x_{0})$, which means that $\hat \mu(A)=1$. This proves that  $\hat \mu$ is ergodic.

We now assume that the probability measure $\hat \mu=\Xi(\nu)$ is ergodic. To see that $\nu$ is ergodic it is enough to prove that for $\nu$-almost every $(\alpha,x)$ 
the Markov chain $Z_{n}$ defined by 
\[
\begin{cases}
Z_{0}=(\alpha,x)\\
Z_{n}=(\omega_{n-1},f_{\omega}^{n}(x))\quad \mbox{for} \quad n\geq 1
\end{cases}
\]
satisfies 
$$
\lim_{n\to \infty} \frac{1}{n}\sum_{i=0}^{n}\phi(Z_{i})=\int \phi\, d\nu
$$
for $\mathbb{P}$-almost every $\omega$, which is equivalent to prove that 
$$
\lim_{n\to \infty} \frac{1}{n}\sum_{i=0}^{n}\phi((\omega_{i-1}, f_{\omega}^{i}(x))=\int \phi\, d\nu
$$
for $\mathbb{P}\times\Pi_{2*}\nu$-almost every $(\omega,x)$.
In order to show this, we
consider a bounded measurable map $\phi\colon E\times M\to \mathbb{R}$ and define  $\hat{\phi}\colon \Sigma\times M\to \mathbb{R}$ by 
$$
\hat{\phi}(\omega,x)=\phi(\omega_{0},f_{\omega_{0}}(x)).
$$
Since $\hat \mu$ is ergodic, it follows from the Birkhoff ergodic theorem 
that
\begin{equation}\label{talaaa}
\lim_{n\to \infty} \frac{1}{n}\sum_{i=0}^{n}\hat{\phi}(F^{i}(\omega,x))=\int \hat{\phi}\, d\hat\mu
\end{equation}
for $\hat \mu$-almost every $(\omega,x)$. We claim that 
$
\int \hat{\phi}\, d\hat \mu=\int \phi\,d\nu.
$
Indeed,
it follows from Theorem \ref{one} that   
$$
\nu_{\alpha}=f_{\alpha*}\mu_{\alpha}
$$
for $m$-almost every $\alpha$. Therefore
\[ 
\begin{split}
\int \hat{\phi}\, d\hat \mu&=\int \phi(\omega_{0},f_{\omega_{0}}(x))\, d\hat \mu_{\omega}(x)\,d\mathbb{P}(\omega)\\
&=\int \phi(\omega_{0},x)\, df_{\omega_{0}*} \mu_{\omega_{0}}(x)\, d\mathbb{P}(\omega)\\
&=\int \phi(\omega_{0},x)\,d\nu_{\omega_{0}}(x)\,dm(\omega_{0})=\int \phi\,d\nu,
\end{split}
\]
which proves our claim.

Since  $\hat{\phi}(F^{i}(\omega,x))=\phi(\omega_{i-1},f^{i}_{\omega}(x))$ for every $i$,
  we conclude from \eqref{talaaa} that 
\begin{equation}\label{fiii}
\lim_{n\to \infty} \frac{1}{n}\sum_{i=0}^{n-1}\phi(\omega_{i-1},f_{\omega}^{i}(x))=\int \phi\, d\nu
\end{equation}
for $\hat \mu$-almost every $(\omega,x)$. Now, Lemma \ref{equivalente} implies that $\mathbb{P}\times\Pi_{2*}\nu$ is absolutely continuous with respect to $\hat \mu$. Therefore, from \eqref{fiii} we conclude that
$$
\lim_{n\to \infty} \frac{1}{n}\sum_{i=0}^{n}\phi((\omega_{i-1},f_{\omega}^{i}(x))=\int \phi\, d\nu
$$
for $\mathbb{P}\times \Pi_{2*}\nu$-almost every $(\omega,x)$, which implies that $\nu$ is ergodic.

This completes the proof of the theorem.
\hfill \qed
\section{Local synchronization}\label{sectionlocsinc}
In this section, we prove Theorem \ref{teocontraction}.
We start with a preliminary technical result. In what follows in this section, $p$ is a transition probability on a compact metric space $E$ (endowed with its Borel $\sigma$-algebra) having a unique stationary measure $m$, the pair $(p,m)$ is bounded, and $C$ denotes a constant satisfying \eqref{faci}. We also assume that
the map $\alpha\mapsto p(\alpha,\cdot)$ is continuous in the weak$\star$-topology.

\begin{theorem}\label{upperr}
Let $M$ be a compact metric space and consider  a measurable map $f\colon E\times M\to M$ such that $f_{\alpha}$ is continuous for every $\alpha\in E$. 
Let $F\colon \Sigma\times M\to \Sigma\times M$ be the skew product induced by the Markovian random iteration  $\varphi$ associated with $(p,m)$ and the map $f$. 
Let $\zeta\colon \Sigma\times M\to [0,\infty]$ be a measurable map such that:
\begin{enumerate}
\item[(i)]
For every $\omega\in \Sigma$, the map $x\mapsto\zeta(\omega,x)$ is lower semi-continuous.

\item[(i)] $
\zeta(\omega,x)\geq \zeta(F(\omega,x))$ for every $(\omega,x)\in \Sigma\times M$.
\end{enumerate}
Then for every $x_{0}\in M$, for $\mathbb{P}$-almost every $\omega$ there is a subset 
$U_{\omega,x_{0}}$ of  $I_{0}(\varphi)$ such that  
$$
\zeta(\omega,x_{0})\geq C^{2} \sup_{\hat \mu\in U_{\omega,x_{0}}} 
\int \zeta(\vartheta,x)\, d\hat \mu(\vartheta,x).
$$ 
\end{theorem}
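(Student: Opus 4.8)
The plan is to realise the elements of $U_{\omega,x_{0}}$ as images under the bijection $\Xi$ of Theorem \ref{one} of the weak$\star$-accumulation points of the empirical measures of the Markov chain $Z_{n}=(\omega_{n-1},f^{n}_{\omega}(x_{0}))$ on $E\times M$, and then to bound $\zeta(\omega,x_{0})$ from below by integrals against these accumulation points, combining Lemma \ref{difmarkocase}, a martingale/Cesàro argument, and the lower semicontinuity of $\zeta$.

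The first step is to show that $\hat p$ is weak-Feller, i.e.\ that $\hat{\mathcal P}g(\alpha,x):=\int g(\beta,f_{\beta}(x))\,p(\alpha,d\beta)$ is bounded and continuous for every $g\in C(E\times M)$. Boundedness is clear. For continuity, write $p_{\alpha}=k(\alpha,\cdot)\,m$ with $C\le k(\alpha,\cdot)\le C^{-1}$; if $(\alpha_{j},x_{j})\to(\alpha,x)$, split $\hat{\mathcal P}g(\alpha_{j},x_{j})-\hat{\mathcal P}g(\alpha,x)$ into $\int\big(g(\beta,f_{\beta}(x_{j}))-g(\beta,f_{\beta}(x))\big)k(\alpha_{j},\beta)\,dm(\beta)$, which tends to $0$ by dominated convergence (using continuity of each $f_{\beta}$ and of $g$, together with the uniform bound $k(\alpha_{j},\cdot)\le C^{-1}$), and $\int g(\beta,f_{\beta}(x))\big(k(\alpha_{j},\beta)-k(\alpha,\beta)\big)\,dm(\beta)$, which tends to $0$ because the uniformly bounded densities $k(\alpha_{j},\cdot)$ converge weak$\star$ in $L^{\infty}(m)$ to $k(\alpha,\cdot)$ (any weak$\star$ limit point agrees with $k(\alpha,\cdot)$ on $C(E)$, which is dense in $L^{1}(m)$). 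With $\hat p$ weak-Feller and $E\times M$ compact, the standard Krylov--Bogolyubov argument for Markov chains applies: since $\tfrac1n\sum_{i=1}^{n}\big(g(Z_{i+1})-\hat{\mathcal P}g(Z_{i})\big)\to0$ $\mathbb{P}$-a.s.\ by the martingale strong law (bounded increments) and $\tfrac1n\sum_{i=1}^{n}\big(g(Z_{i+1})-g(Z_{i})\big)\to0$, using a countable dense family of $g$'s we get that, for $\mathbb{P}$-a.e.\ $\omega$, every weak$\star$-accumulation point $\nu$ of $\theta_{n}^{\omega}:=\tfrac1n\sum_{i=1}^{n}\delta_{Z_{i}(\omega)}$ is $\hat p$-stationary, i.e.\ $\nu\in S(\varphi)$. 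By Proposition \ref{invariant}(i) such $\nu$ has first marginal $m$, so $\Xi(\nu)\in I_{0}(\varphi)$ is defined; we set $U_{\omega,x_{0}}:=\{\Xi(\nu):\nu\text{ is a weak}\star\text{-accumulation point of }(\theta_{n}^{\omega})_{n}\}$, which is non-empty by compactness.

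Fix $\hat\mu=\Xi(\nu)\in U_{\omega,x_{0}}$ with $\theta_{n_{k}}^{\omega}\to\nu$. For a truncation level $N$, the map $\zeta_{N}:=\zeta\wedge N$ is bounded, measurable, lower semicontinuous in $x$ (as a minimum with a constant), and still satisfies $\zeta_{N}\ge\zeta_{N}\circ F$; set $\bar\zeta_{N}(x):=\int\zeta_{N}(\vartheta,x)\,d\mathbb{P}(\vartheta)$, which is lower semicontinuous by Fatou. Applying the martingale convergence theorem to the bounded function $\zeta_{N}(\cdot,x_{0})$, passing to Cesàro averages, and using $\zeta_{N}(\cdot,x_{0})\ge\zeta_{N}\circ F^{i}(\cdot,x_{0})$ together with Lemma \ref{difmarkocase}, we obtain for $\mathbb{P}$-a.e.\ $\omega$
$$
\zeta_{N}(\omega,x_{0})=\lim_{n}\frac1n\sum_{i=0}^{n-1}\mathbb{E}\big(\zeta_{N}(\cdot,x_{0})\mid\mathscr{F}_{i}\big)(\omega)\ \ge\ C\,\limsup_{n}\frac1n\sum_{i=1}^{n}\bar\zeta_{N}\big(f^{i}_{\omega}(x_{0})\big).
$$
Since $\Pi_{2*}\theta_{n_{k}}^{\omega}=\tfrac1{n_{k}}\sum_{i=1}^{n_{k}}\delta_{f^{i}_{\omega}(x_{0})}\to\Pi_{2*}\nu$ weak$\star$ and $\bar\zeta_{N}$ is lower semicontinuous and bounded below, the portmanteau theorem gives $\liminf_{k}\tfrac1{n_{k}}\sum_{i=1}^{n_{k}}\bar\zeta_{N}(f^{i}_{\omega}(x_{0}))\ge\int\bar\zeta_{N}\,d\Pi_{2*}\nu$. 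Hence $\zeta_{N}(\omega,x_{0})\ge C\int\bar\zeta_{N}\,d\Pi_{2*}\nu=C\int\zeta_{N}\,d(\mathbb{P}\times\Pi_{2*}\nu)$ by Tonelli, and Lemma \ref{equivalente} ($\mathbb{P}\times\Pi_{2*}\nu\ge C\hat\mu$) yields $\zeta_{N}(\omega,x_{0})\ge C^{2}\int\zeta_{N}\,d\hat\mu$. Letting $N\to\infty$ and using monotone convergence on both sides gives $\zeta(\omega,x_{0})\ge C^{2}\int\zeta\,d\hat\mu$; taking the supremum over $\hat\mu\in U_{\omega,x_{0}}$ finishes the proof, all the exceptional sets being countable unions of $\mathbb{P}$-null sets.

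The main obstacle is the first step: the weak-Feller property of $\hat p$ is not automatic here, since $f$ is only a Carathéodory map ($f_{\alpha}$ continuous for each $\alpha$, but $f$ itself merely measurable), so one cannot argue that $\beta\mapsto g(\beta,f_{\beta}(x))$ is continuous; this is exactly where the boundedness of $(p,m)$ is used, through the weak$\star$ precompactness of the densities in $L^{\infty}(m)$ and the identification of their weak$\star$ limit. A secondary, more routine point is the bookkeeping required to reduce the $[0,\infty]$-valued $\zeta$ to its truncations while preserving lower semicontinuity and super-invariance, and to interchange the ensuing limits.
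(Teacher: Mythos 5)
Your proof follows the same route as the paper's: realize $U_{\omega,x_{0}}$ as $\Xi$-images of the weak$\star$-accumulation points of the empirical measures of $Z_{n}$ (the paper cites Furstenberg's \cite[Lemma 7.1]{Fur} for the Krylov--Bogolyubov step that you re-derive via the martingale strong law and a dense family in $C(E\times M)$), and then combine L\'evy's law, Lemma \ref{difmarkocase}, Portmanteau for lower semi-continuous $\bar\zeta$, and Lemma \ref{equivalente} to get the $C^{2}$ bound. The one place you go beyond the paper is worth flagging: the paper asserts that the weak-Feller property of $\hat p$ is ``readily checked'' from continuity of $\alpha\mapsto p(\alpha,\cdot)$ and of each $f_{\alpha}$, but since $\beta\mapsto g(\beta,f_{\beta}(x))$ need not be continuous this does not follow from weak$\star$-continuity of $p$ alone; your argument via the bounded densities $k(\alpha,\cdot)$ and weak$\star$ convergence in $L^{\infty}(m)$ correctly supplies this step, making explicit that boundedness of $(p,m)$ is used here and not only in Lemmas \ref{difmarkocase} and \ref{equivalente}. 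Your truncation of $\zeta$ to $\zeta\wedge N$ likewise tidies the application of L\'evy's law to a possibly non-integrable $[0,\infty]$-valued function, a point the paper passes over.
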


\begin{remark}\emph{
In this paper, we will use Theorem \ref{upperr} only for the case where $M=S^{1}$  and $\zeta$ is $F$-invariant, that is, $\zeta(\omega,x)= \zeta(F(\omega,x))$ for every $(\omega,x)\in \Sigma\times M$. However, we chose to state Theorem \ref{upperr} in such 
generality because the proof is the same in any case and we believe that it can be a useful tool in the study of Markovian 
random iterations. See \cite[Lemma 3.20]{Malicet} for the i.i.d. version of Theorem \ref{upperr}.}
\end{remark}

\begin{proof}[Proof of Theorem \ref{upperr}]

Denote by $A_{\omega,x_{0}}$ the set of accumulations points of the sequence of probability measures  
$\frac{1}{n}\sum_{i=0}^{n-1}\delta_{f_{\omega}^{i}(x_{0})}$.
We need the following estimative:
\begin{proposition}\label{firstestimative}
For every $x_{0}\in M$, for $\mathbb{P}$-almost every $\omega$ we have
$$
\zeta(\omega,x_{0})\geq C \sup_{\mu\in A_{\omega,x_{0}}} \int \int \zeta(\vartheta,x)\,d\mathbb{P}(\vartheta)d\mu(x).
$$
\end{proposition}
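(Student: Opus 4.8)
The plan is to reduce to the case where $\zeta$ is bounded by a truncation, and then transplant the argument from the forward implication of Theorem~\ref{ergodic}: the only change is that the $F$-invariance of the test function there is replaced by the one-sided hypothesis $\zeta\ge\zeta\circ F$, so the relevant equalities become inequalities in the favourable direction. Converting the resulting lower bound on Ces\`aro averages into a bound over the accumulation set $A_{\omega,x_0}$ is then a routine weak$\star$ lower-semicontinuity argument.

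First I would fix $N\in\mathbb{N}$ and set $\zeta_N=\min(\zeta,N)$. This map is bounded, it still satisfies $\zeta_N(\omega,x)\ge\zeta_N(F(\omega,x))$ because $t\mapsto\min(t,N)$ is non-decreasing, and for each $\omega$ the map $x\mapsto\zeta_N(\omega,x)$ remains lower semicontinuous, being the minimum of a lower semicontinuous map and a constant. Put
$$\bar\zeta_N(x)=\int\zeta_N(\vartheta,x)\,d\mathbb{P}(\vartheta);$$
by Fatou's lemma $\bar\zeta_N$ is lower semicontinuous on $M$ and bounded by $N$. Now, exactly as in the proof of Theorem~\ref{ergodic}, L\'evy's law applied to the bounded $\mathscr{F}$-measurable function $\zeta_N(\cdot,x_0)$ together with the convergence of Ces\`aro averages gives, for $\mathbb{P}$-almost every $\omega$,
$$\zeta_N(\omega,x_0)=\lim_{n\to\infty}\frac1n\sum_{i=0}^{n-1}\mathbb{E}\big(\zeta_N(\cdot,x_0)\mid\mathscr{F}_i\big)(\omega).$$
Since $\zeta_N(\omega,x_0)\ge\zeta_N(F^{i}(\omega,x_0))$ pointwise, we get $\mathbb{E}(\zeta_N(\cdot,x_0)\mid\mathscr{F}_{i-1})\ge\mathbb{E}(\zeta_N(F^{i}(\cdot,x_0))\mid\mathscr{F}_{i-1})$, and Lemma~\ref{difmarkocase} bounds the latter below by $C\,\bar\zeta_N(f_\omega^{i}(x_0))$. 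Combining these, and noting that the boundary term of the Ces\`aro sums is $O(N/n)$, yields, for $\mathbb{P}$-almost every $\omega$,
$$\zeta_N(\omega,x_0)\ \ge\ C\,\limsup_{n\to\infty}\frac1n\sum_{i=0}^{n-1}\bar\zeta_N(f_\omega^{i}(x_0))\ =\ C\,\limsup_{n\to\infty}\int\bar\zeta_N\,d\mu_n,\qquad \mu_n:=\frac1n\sum_{i=0}^{n-1}\delta_{f_\omega^{i}(x_0)}.$$

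Then I would fix such an $\omega$ and a measure $\mu\in A_{\omega,x_0}$, choose a subsequence with $\mu_{n_k}\to\mu$ in the weak$\star$ topology, and use that $\mu\mapsto\int g\,d\mu$ is weak$\star$ lower semicontinuous for every bounded-below lower semicontinuous $g$ on the compact space $M$. This gives $\int\bar\zeta_N\,d\mu\le\liminf_k\int\bar\zeta_N\,d\mu_{n_k}\le C^{-1}\zeta_N(\omega,x_0)$, and since $\mu\in A_{\omega,x_0}$ was arbitrary,
$$\zeta_N(\omega,x_0)\ \ge\ C\sup_{\mu\in A_{\omega,x_0}}\int\int\zeta_N(\vartheta,x)\,d\mathbb{P}(\vartheta)\,d\mu(x).$$
Finally, after replacing the countably many exceptional $\mathbb{P}$-null sets (one per $N$; they depend on $x_0$ but not on $\mu$) by their union, let $N\to\infty$: the left-hand side increases to $\zeta(\omega,x_0)$, while by monotone convergence inside the integral and then taking the supremum over $\mu$, the right-hand side increases to $C\sup_{\mu\in A_{\omega,x_0}}\int\int\zeta(\vartheta,x)\,d\mathbb{P}(\vartheta)\,d\mu(x)$, which is the assertion.

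The only genuinely delicate point is the bookkeeping forced by $\zeta$ possibly being unbounded or $+\infty$-valued: this requires the truncation $\zeta_N$, the terminal monotone-convergence passage, and gathering all the $N$-dependent null sets from L\'evy's law and Lemma~\ref{difmarkocase} before optimizing over $\mu\in A_{\omega,x_0}$. Everything else is a direct reuse of machinery already in place — Lemma~\ref{difmarkocase}, the Ces\`aro--L\'evy computation from the proof of Theorem~\ref{ergodic}, and the standard weak$\star$ lower semicontinuity of integration against a lower semicontinuous integrand — and should not present any difficulty.
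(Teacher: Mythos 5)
Your proof is correct and follows the same route the paper does: L\'evy's upward law for $\zeta(\cdot,x_0)$, convergence of the Ces\`aro averages of the conditional expectations, the one-sided inequality $\mathbb{E}(\zeta(\cdot,x_0)\mid\mathscr{F}_{i-1})\ge \mathbb{E}(\zeta(F^i(\cdot,x_0))\mid\mathscr{F}_{i-1})$, Lemma~\ref{difmarkocase}, and the Portmanteau/lower-semicontinuity argument to pass from Ces\`aro sums to the accumulation measure $\mu$.

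The only difference is your truncation $\zeta_N=\min(\zeta,N)$ followed by a monotone-convergence passage at the end. The paper applies L\'evy's law directly to the possibly $[0,\infty]$-valued function $\zeta(\cdot,x_0)$, which is delicate when $\zeta$ is not integrable; your truncation removes that worry at essentially no cost and also makes the boundary-shift in the Ces\`aro averages and the fact that the exceptional null set can be chosen independently of $\mu$ explicit. This is a more careful presentation of the same argument rather than a genuinely different proof.
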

\begin{proof}
 Given $\mu\in  A_{\omega,x_{0}}$, by definition there is a subsequence $n_{k}$ such that 
$$
\lim_{k\to \infty} \frac{1}{n_{k}}\sum_{i=0}^{n_{k}-1}\delta_{f_{\omega}^{i}(x_{0})}=\mu.
$$
Let $\mathscr{F}_{n}$ be  the $\sigma$-algebra generated by the canonical projections 
$\omega\mapsto\omega_{i}$, $i=0,\dots, n$.
  From 
Levy's law we have that $\zeta(\omega,x_{0})=\lim_{n\to \infty} \mathbb{E}(\zeta(\cdot,x_{0}|\mathscr{F}_{n})(\omega)$
for $\mathbb{P}$-almost everywhere $\omega\in \Sigma$. In particular, 
\begin{equation}\label{cesaro1}
\zeta(\omega,x_{0})=\lim_{k\to \infty} \frac{1}{n_{k}}\sum_{i=0}^{n_{k}-1} \mathbb{E}(\zeta(\cdot,x_{0}|\mathscr{F}_{i})(\omega)
\end{equation}
for $\mathbb{P}$-almost everywhere $\omega$.
Since $\zeta(\omega,x)\geq \zeta(F(\omega,x))$ for every $(\omega,x)\in \Sigma\times M$, we have  $\zeta(\omega,x_{0})\geq\zeta(F^{i}(\omega,x_{0}))$ 
for every $\omega$ and $i$, which implies that 
\begin{equation}\label{finiteint}
\mathbb{E}(\zeta(\cdot,x_{0}|\mathscr{F}_{i-1})(\omega)\geq\mathbb{E}(\zeta(F^{i}(\cdot,x_{0}))|\mathscr{F}_{i-1})(\omega).
\end{equation}

Then, it follows from \eqref{finiteint}, \eqref{cesaro1} and Lemma \ref{difmarkocase} that 
for $\mathbb{P}$-almost every $\omega$, we have
\[
\begin{split}
\zeta(\omega,x_{0})&\geq C \liminf_{k\to \infty} \frac{1}{n_{k}}\sum_{i=0}^{n_{k}-1} \bar \zeta(f_{\omega}^{n}(x_{0}))\\
&=C\liminf_{k\to \infty} \int \bar\zeta \, d\left(  \frac{1}{n_{k}}\sum_{i=0}^{n_{k}-1} \delta_{f_{\omega}^{n}(x_{0})} \right).
\end{split}
\]
Next, because $\zeta$ is lower semi-continuous,  the map $\bar \zeta$ is also 
lower semi-continuous. Hence, it follows from Portmanteau theorem that 
$$
\zeta(\omega,x_{0})\geq C \int \bar \zeta \, d\mu=C\int \int \zeta(\vartheta,x)\,d\mathbb{P}(\vartheta)d\mu(x)
$$
for $\mathbb{P}$-almost every $\omega$.
This completes the proof of the proposition. 
\end{proof}

Fix $x_{0}\in M$. Let $\hat p$ be the transition probability of the Markov chain  $Z_{n}(\omega)=(\omega_{n-1},f_{\omega}^{i}(x_{0}))\in \Sigma\times M$, recall the definition in \eqref{transition}.
 Let $S_{\omega,x_{0}}$ be the set 
of accumulation points of the sequence of probability measures 
$$
\frac{1}{n}\sum_{i=0}^{n-1}\delta_{Z_{n}(\omega)}.
$$
By hypothesis, $f_{\alpha}$ is continuous for every $\alpha$ and the mapping $\alpha\mapsto p(\alpha,\cdot)$ is continuous in the weak$\star$-topology. Hence, it is readily checked that the map 
 $$
 (\alpha,x)\mapsto \hat p((\alpha,x),\cdot)
 $$
 is also continuous in the weak$\star$-topology. Since $E\times M$ is a compact metric space,  it follows from  a result on general Markov chains  due  to Furstenberg, see  \cite[Lemma $7.1$]{Fur}, that there is a subset $\Sigma_{0}\subset\Sigma$ of $\mathbb{P}$-full measure 
such that for every $\omega\in \Sigma_{0}$ the set $S_{\omega,x_{0}}$ is constituted by stationary measures of the Markovian random iteration $\varphi$, that is, $S_{\omega,x_{0}}\subset S(\varphi)$. Let $\Xi\colon S(\varphi)\to I_{0}(\varphi)$ be the bijection of Theorem \ref{one}. 
Then, for every $\omega\in \Sigma_{0}$, we define  
 $$
 U_{\omega,x_{0}}\eqdef \Xi(S_{\omega,x_{0}}).
  $$

We are now ready to  prove  Theorem \ref{upperr}. To this end, let $\hat \mu\in U_{\omega,x_{0}}$. By definition, there is a stationary measure
 $\nu \in S_{\omega,x_{0}}$ such that $\Xi(\nu)=\hat \mu$.
On the other hand, by definition of $S_{\omega,x_{0}}$,
there is a subsequence $(n_{k})_{k}$ 
 such that 
$$
 \nu=\lim_{n\to \infty}\frac{1}{n_{k}}\sum_{i=0}^{n_{k}-1}\delta_{(\omega_{i-1},f_{\omega}^{i}(x_{0}))}.
$$ 
Let $\Pi_{2}$ be the projection on the second factor of $E\times M$.
We observe that  $\Pi_{2*}\nu\in A_{\omega,x_{0}}$, and hence we can  apply  Proposition \ref{firstestimative} to obtain 
 \begin{equation}\label{lateral}
  \zeta(\omega,x_{0})\geq C\iint \zeta(\vartheta,x) \, d\Pi_{2*}\nu(x) d\mathbb{P}(\vartheta).
\end{equation}
We now  apply Lemma \ref{equivalente}  to equation \eqref{lateral} to obtain 
\[
\begin{split}
\zeta(\omega,x_{0})
 &\geq C^{2} \int \zeta(\vartheta,x) \, d\hat \mu(\vartheta,x).
\end{split}
\]
The proof of the theorem is now complete.

\end{proof}
\subsection{Reformulation of Theorem \ref{teocontraction}}
We now state a reformulation of Theorem \ref{teocontraction} using the  \emph{exponent 
of contraction} of a random iteration  of homeomorphisms
introduced in \cite{Malicet}. 

Let $\varphi$ be a Markovian random iteration of homeomorphisms of the circle $S^{1}$, as in the statement of Theorem \ref{teocontraction}. The \emph{exponent of 
contraction of $\varphi$} at the point $(\omega,x)\in \Sigma\times S^{1}$ is the non-positive quantity
$$
\lambda(\omega,x)=\varlimsup_{y\to x}\, \varlimsup_{n\to \infty} \frac{\log (Leb[f^{n}_{\omega}(y),f^{n}_{\omega}(x)])}{n}
$$
where $Leb$ denotes the Lebesgue measure on $S^{1}$ and $[a,b]$ denotes the arc of smaller length determined by the points $a$ and $b$.
\begin{theorem}\label{reformulation}
Under assumptions of Theorem \ref{teocontraction} there is $\lambda_{0}<0$ such that for every 
$x\in S^{1}$ 
 we have 
$$
\lambda(\omega,x)\leq \lambda_{0}
$$  
for $\mathbb{P}$-almost every $\omega$ in $\Sigma$.
\end{theorem}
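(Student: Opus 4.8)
The plan is to apply Theorem \ref{upperr} to the function $\zeta\eqdef-\lambda\colon\Sigma\times S^{1}\to[0,\infty]$ and to reduce the required uniform negativity of $\lambda$ to a statement about $\varphi$-invariant measures, which the invariance principle settles via the no-common-invariant-measure hypothesis. First I would record that $\zeta$ satisfies the hypotheses of Theorem \ref{upperr}. Since $f^{n}_{\omega}(y)=f^{n-1}_{\sigma\omega}(f_{\omega_{0}}(y))$ and $f_{\omega_{0}}$ is a homeomorphism, reindexing the inner $\varlimsup$ and changing variables $y\mapsto f_{\omega_{0}}(y)$ in the outer $\varlimsup_{y\to x}$ give $\lambda\circ F=\lambda$, so $\zeta$ is $F$-invariant; and the triangle inequality $Leb[f^{n}_{\omega}(y),f^{n}_{\omega}(x')]\leq Leb[f^{n}_{\omega}(y),f^{n}_{\omega}(x)]+Leb[f^{n}_{\omega}(x),f^{n}_{\omega}(x')]$ on $S^{1}$, divided by $n$ and pushed through the $\varlimsup$, shows that $x\mapsto\lambda(\omega,x)$ is upper semi-continuous, hence $x\mapsto\zeta(\omega,x)$ is lower semi-continuous (measurability of $\lambda$ being a routine check that reduces the outer $\varlimsup$ to a countable dense set of $y$). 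Applying Theorem \ref{upperr} with $M=S^{1}$, for every $x_{0}$ and $\mathbb{P}$-a.e. $\omega$ there is a non-empty $U_{\omega,x_{0}}=\Xi(S_{\omega,x_{0}})$ with $S_{\omega,x_{0}}\subset S(\varphi)$ and $\zeta(\omega,x_{0})\geq C^{2}\int\zeta\,d\hat\mu$ for every $\hat\mu\in U_{\omega,x_{0}}$; rewriting, $\lambda(\omega,x_{0})\leq C^{2}\Lambda$ where $\Lambda\eqdef\sup_{\nu\in S(\varphi)}\int\lambda\,d\,\Xi(\nu)$. Since $C^{2}\Lambda$ does not depend on $x_{0}$ or $\omega$, it remains to prove $\Lambda<0$, and then $\lambda_{0}\eqdef C^{2}\Lambda$ works.

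To estimate $\Lambda$, I would replace $\Xi(\nu)$ by the product measure $\mathbb{P}\times\Pi_{2*}\nu$. By Lemma \ref{equivalente} these two measures are equivalent and $C\,\Xi(\nu)\leq\mathbb{P}\times\Pi_{2*}\nu$; since $\lambda\leq0$ this yields $\int\lambda\,d\,\Xi(\nu)\leq C\int\bar\lambda\,d\,\Pi_{2*}\nu$, where $\bar\lambda(x)=\int\lambda(\omega,x)\,d\mathbb{P}(\omega)$. Reverse Fatou together with the upper semi-continuity of $\lambda(\omega,\cdot)$ makes $\bar\lambda$ upper semi-continuous and $\leq0$, so $\theta\mapsto\int\bar\lambda\,d\theta$ is upper semi-continuous on $\mathcal{P}(S^{1})$; since $\nu\mapsto\Pi_{2*}\nu$ is weak$\star$-continuous and $S(\varphi)$ is weak$\star$-compact (the transition probability $\hat p$ is Feller on the compact space $E\times S^{1}$, as shown in the proof of Theorem \ref{upperr}), the supremum $\Lambda\leq C\sup_{\nu\in S(\varphi)}\int\bar\lambda\,d\,\Pi_{2*}\nu$ is attained at some $\nu_{*}\in S(\varphi)$. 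Hence it suffices to show $\int\lambda\,d(\mathbb{P}\times\Pi_{2*}\nu)<0$ for every $\nu\in S(\varphi)$.

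For this last point, fix $\nu\in S(\varphi)$; using the ergodic decomposition $\nu=\int\nu_{s}\,d\kappa(s)$ and $\Pi_{2*}\nu=\int\Pi_{2*}\nu_{s}\,d\kappa(s)$, I may assume $\nu$ ergodic. Since $\varphi$ is bounded, Theorem \ref{ergodic} gives that $\hat\mu\eqdef\Xi(\nu)$ is an ergodic $\varphi$-invariant measure, equivalent to $\mathbb{P}\times\Pi_{2*}\nu$ by Lemma \ref{equivalente}. If $\int\lambda\,d(\mathbb{P}\times\Pi_{2*}\nu)=0$, then $\lambda$ being nonpositive forces $\lambda\equiv0$ $\hat\mu$-almost everywhere, i.e. the exponent of contraction vanishes $\hat\mu$-a.e. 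At this point I invoke the invariance principle \cite[Theorem F]{Malicet}, applied to the skew product $F$ and the ergodic invariant measure $\hat\mu$ (passing, if needed, to the natural extension of the Markov shift on $\Sigma$): it produces a probability measure $\theta$ on $S^{1}$ with $f_{\alpha*}\theta=\theta$ for $m$-almost every $\alpha$, contradicting the standing hypothesis of Theorem \ref{teocontraction}. Therefore $\int\lambda\,d(\mathbb{P}\times\Pi_{2*}\nu)<0$, so $\Lambda<0$ and the theorem follows; Theorem \ref{teocontraction} then follows from Theorem \ref{reformulation} by a routine Borel--Cantelli and continuity argument producing the neighbourhood $I$.

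I expect the main obstacle to be precisely this last step: one must verify that Malicet's invariance principle, originally designed for i.i.d.\ driving, genuinely applies to the ergodic $F$-invariant measure $\hat\mu\in I_{0}(\varphi)$ sitting over a Markov base, and that its conclusion delivers exactly a common invariant probability for the family $\{f_{\alpha}\}$. Everything else is bookkeeping with the correspondence of Theorem \ref{one}, Lemma \ref{equivalente}, and standard semi-continuity and compactness facts.
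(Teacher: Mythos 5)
Your overall architecture matches the paper's: apply Theorem \ref{upperr} to $\zeta=-\lambda$, reduce to showing that $\varphi$-invariant measures in $I_0(\varphi)$ have strictly negative contraction exponent, contradict via the invariance principle, and obtain a uniform bound from a compactness argument. However, there is a genuine gap exactly where you flag your uncertainty, and it is not where you think. The worry about applying the invariance principle ``over a Markov base'' is moot, since the paper's Theorem~\ref{ivp} already states Malicet's Theorem F for a general $\varphi$-invariant measure (no ergodicity required, which also makes your detour through the ergodic decomposition and Theorem~\ref{ergodic} unnecessary). What you do gloss over is that the invariance principle does \emph{not} ``produce a probability measure $\theta$ on $S^1$ with $f_{\alpha*}\theta=\theta$''; it produces the equivariance $f_{\omega_0*}\hat\mu_\omega=\hat\mu_{\sigma(\omega)}$, and extracting a single $\theta$ requires a further argument. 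That argument is precisely where the two special hypotheses of this setting enter: $\hat\mu\in I_0(\varphi)$ forces $\hat\mu_\omega=\mu_{\omega_0}$, so equivariance becomes $f_{\alpha*}\mu_\alpha=\mu_\beta$ for $p_\alpha$-a.e.\ $\beta$, and then boundedness ($m\ll p_\alpha$) upgrades this to ``for $m$-a.e.\ $\beta$'', which makes the left side independent of $\beta$ and hence $\beta\mapsto\mu_\beta$ $m$-a.e.\ constant. Without this bridge your contradiction never materializes; the paper carries it out explicitly.

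On the remaining points you and the paper diverge in a harmless way. For the uniform bound you propose weak$\star$-compactness of $S(\varphi)$ (via the Feller property of $\hat p$ on the compact $E\times S^1$) together with upper semi-continuity of $\nu\mapsto\int\bar\lambda\,d\Pi_{2*}\nu$, whereas the paper instead observes that $\bar\lambda$ is upper semi-continuous on the compact circle, hence attains a negative maximum $\lambda_0$, and then applies Proposition~\ref{firstestimative} directly. Both routes are valid; the paper's is a bit leaner because it avoids invoking compactness of the space of stationary measures and the weak$\star$-continuity of $\Pi_{2*}$. Also note a small sign slip: to pass from $\int\lambda\,d\,\Xi(\nu)$ to $C\int\bar\lambda\,d\Pi_{2*}\nu$ with $\lambda\le0$ you should use $\mathbb{P}\times\Pi_{2*}\nu\le C^{-1}\,\Xi(\nu)$ (applied to the nonnegative $-\lambda$), not $C\,\Xi(\nu)\le\mathbb{P}\times\Pi_{2*}\nu$; Lemma~\ref{equivalente} provides both, so the conclusion is unchanged, but the one you cite yields the reverse inequality.
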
\label{ivp}
The proof of Theorem \ref{reformulation} is a consequence of Theorem \ref{upperr} combined with the invariance principle \cite[Theorem F]{Malicet}.  To state this invariance principle for $\varphi$,
we need the following definition. Let  $\hat \mu$ be a $\varphi$-invariant measure. The \emph{exponent of contraction 
of $\hat \mu$} is the non-positive quantity 
$$
\lambda(\hat\mu)=\int\lambda(\omega,x) \, d\hat \mu(\omega,x).
$$
\begin{theorem}[Invariance principle]\label{ivp}
 Let $\hat \mu$ be a $\varphi$-invariant measure such that $\lambda(\hat \mu)=0$. Then 
$$
f_{\omega_{0}*}\hat \mu_{\omega}=\hat \mu_{\sigma(\omega)}
$$
for $\mathbb{P}$-almost every $\omega\in \Sigma$.
\end{theorem}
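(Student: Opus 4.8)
The plan is to adapt the argument behind Malicet's invariance principle \cite[Theorem~F]{Malicet} to the Markovian skew product $F$: that argument is essentially local in the fibre $S^{1}$ and does not use independence of the sequence $\{\omega_n\}$, and the one place where the base dynamics enters — a time reversal of the Markov measure $\mathbb{P}$ — is governed by the dual transition probability $q$.

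I would first translate $F$-invariance of $\hat\mu$ into an equation for its conditional measures $\{\hat\mu_\omega\}$. Writing $F_*\hat\mu=\hat\mu$ on measurable rectangles $A\times B$ and using that, under $\mathbb{P}$, the conditional law of $\omega_0$ given $(\omega_1,\omega_2,\dots)$ is $q(\omega_1,\cdot)$ — this is the identity $m(d\alpha)\,p(\alpha,d\beta)=m(d\beta)\,q(\beta,d\alpha)$ that underlies the whole paper — one obtains, for $\mathbb{P}$-almost every $\omega$,
$$
\hat\mu_{\omega}=\int f_{\alpha*}\,\hat\mu_{(\alpha,\,\omega_0,\omega_1,\dots)}\;q(\omega_0,d\alpha),
$$
where $(\alpha,\omega_0,\omega_1,\dots)$ denotes $\omega$ with $\alpha$ prepended. (When $\hat\mu\in I_0(\varphi)$ this is Proposition~\ref{stati}; in general it is proved by the same computation.) By the matching disintegration of $\mathbb{P}$, the conclusion $f_{\omega_0*}\hat\mu_\omega=\hat\mu_{\sigma(\omega)}$ is equivalent to the statement that the $q$-average above is degenerate, i.e.\ that $f_{\alpha*}\hat\mu_{(\alpha,\omega)}=\hat\mu_\omega$ for $q(\omega_0,d\alpha)$-almost every $\alpha$ and $\mathbb{P}$-almost every $\omega$.

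To prove that this average is degenerate I would pass to the natural extension $\tilde\Sigma=E^{\mathbb Z}$ of $(\Sigma,\sigma,\mathbb{P})$ — an ergodic measure-preserving system, since $m$ is the unique stationary measure of $p$ — on which the skew product becomes invertible, and then run Malicet's argument. Iterating the displayed identity along the past expresses $\hat\mu_\omega$ through $q$-averages of the pushforwards $(f_{\omega_{-1}}\circ\cdots\circ f_{\omega_{-n}})_{*}\hat\mu_{\sigma^{-n}\omega}$; a martingale convergence along the past yields a limiting family, and the hypothesis $\lambda(\hat\mu)=0$ — i.e.\ the backward compositions $f_{\omega_{-1}}\circ\cdots\circ f_{\omega_{-n}}$ do not contract at an exponential rate on $S^{1}$ — is exactly what rules out any genuine averaging, forcing the limit to coincide with $\hat\mu_\omega$, whence the equivariance. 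I expect this last step — the implication ``zero exponent of contraction $\Rightarrow$ equivariant conditional measures'' — to be the main obstacle: it is the technical core of every invariance principle, and the task is to check that Malicet's fibrewise argument is untouched by replacing the Bernoulli base with a Markov base, i.e.\ that the only point at which the base intervenes, the time reversal used to derive the displayed identity, is correctly absorbed into $q$. Reducing to the case where $\hat\mu$ is ergodic (using that $\lambda(\cdot)$ is affine and $\mathbb{P}$ is $\sigma$-ergodic) and the remaining Markovian bookkeeping are routine.
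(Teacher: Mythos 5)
The paper does not prove this theorem at all: immediately after the statement it says that the result ``is just a particular case of a general result proved by Malicet in \cite[Theorem F]{Malicet} for random iterations of homeomorphisms of $S^{1}$,'' and leaves it at that. Malicet's Theorem~F is formulated for a skew product over a general ergodic measure-preserving base system, not only over a Bernoulli shift, and the Markov shift $(\Sigma,\mathbb{P},\sigma)$ \emph{is} such a system (ergodicity of $\mathbb{P}$ follows from uniqueness of the stationary measure $m$). So the paper's ``proof'' is a one-line citation, with no need for the dual chain $q$, the natural extension, or any martingale argument.

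Your proposal actually contains the key observation that makes this citation valid --- that Malicet's argument ``is essentially local in the fibre $S^{1}$ and does not use independence of the sequence $\{\omega_n\}$'' --- but you then draw the wrong conclusion from it. Instead of noting that, precisely because Malicet's theorem is base-agnostic, it can be invoked verbatim, you embark on re-deriving it from scratch: rewriting $F$-invariance through the dual kernel $q$, passing to $E^{\mathbb{Z}}$, iterating along the past, and appealing to martingale convergence. This is a legitimate research programme, but it is not a proof: you explicitly leave the technical core (``zero exponent of contraction $\Rightarrow$ equivariant conditional measures'') as ``the main obstacle.'' That step is exactly the content of Malicet's theorem, so the re-derivation, if it were carried out, would simply reproduce his proof. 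In short, the gap in your attempt is that the crucial implication is stated as an expectation rather than established; the fix is to recognise that no re-derivation is needed because the Markovian skew product already falls under the hypotheses of the cited result, as the paper does.

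Two smaller remarks on the sketch itself. First, your fibrewise identity $\hat\mu_{\omega}=\int f_{\alpha*}\hat\mu_{(\alpha,\omega_0,\omega_1,\dots)}\,q(\omega_0,d\alpha)$ is consistent with Proposition~\ref{stati} when $\hat\mu\in I_0(\varphi)$, but the theorem is stated for an arbitrary $\varphi$-invariant measure; extending the identity to that generality is plausible but should be proved, not asserted in parentheses. Second, the reduction to the ergodic case via the affineness of $\lambda(\cdot)$ is not automatic: after an ergodic decomposition of $\hat\mu$ one gets a decomposition of $\lambda(\hat\mu)=0$ into non-positive summands, which does force each ergodic component to have zero exponent, so the reduction is correct, but it deserves a sentence of justification rather than being called ``routine.''
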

The theorem above is just a particular case of a   general result proved by Malicet in \cite[Theorem F]{Malicet}  for random iterations of homeomorphisms of $S^{1}$. We observe that the invariance principle for random iterations of homeomorphisms of $S^{1}$ has the same flavor of the classical invariance principle of Ledrappier \cite{Led} for random products
 of matrices.

\begin{proof}[Proof of Theorem \ref{reformulation}]

  We start by showing that for every $\hat \mu\in \mathcal{I}_{0}(\varphi) $ we have  $\lambda(\hat \mu)<0$. Indeed, assume that $\lambda(\hat \mu)=0$. Applying Theorem \ref{ivp}  we obtain
\begin{equation}\label{1m}
f_{\omega_{0}*}\hat \mu_{\omega}=\hat \mu_{\sigma(\omega)}
\end{equation}
for $\mathbb{P}$-almost every $\omega$. Let $\{\mu_{\alpha}\colon \alpha\in E\}$ be the family of probability measures for which $\hat \mu_{\omega}=\mu_{\omega_{0}}$. Hence, it follows from \eqref{1m} that 
$$
f_{\omega_{0}*}\mu_{\omega_{0}}=\mu_{\omega_{1}}
$$
for $\mathbb{P}$-almost every $\omega$,
or equivalently, for $m$-almost every $\alpha$ we have
\begin{equation}\label{ultimaaaa}
f_{\alpha*}\mu_{\alpha}=\mu_{\beta}
\end{equation}
for $p_{\alpha}$-almost every $\beta$.
Because $m\ll p_{\alpha}$ for $m$-almost every $\alpha$, we get from \eqref{ultimaaaa} that $\beta\mapsto\mu_{\beta}$ is constant ($m$-a.e.). This shows that the maps $f_{\alpha}$ have an invariant measure in common, which is a contradiction. Therefore, the exponent of contraction $\lambda(\hat \mu)$ is negative. 

We now observe that $\lambda$ is an $F$-invariant map and for every $\omega$, the map $x\mapsto \lambda(\omega,x)$ is upper semi-continuous. In particular, $-\lambda$ is $F$-invariant and for every $\omega$  the map $x\mapsto -\lambda(\omega,x)$ is
lower semi-continuous. Hence, all conditions required in Theorem \ref{upperr} are met for the map $-\lambda$  and so we can apply this theorem  to get that for 
every $x\in S^{1}$, for $\mathbb{P}$-almost every $\omega$, we have
$$
\lambda(\omega,x)\leq \inf_{\hat \mu\in U_{\omega,x}} C^{2}\lambda(\hat \mu)
$$
where $U_{\omega,x}$ is a subset of $\mathcal{I}_{0}(\varphi)$. Since  $\lambda(\hat \mu)<0$ for every $\hat \mu\in\mathcal{I}_{0}(\varphi)$, we conclude that for every  $x\in S^{1}$, for $\mathbb{P}$-almost every $\omega$, we have
\begin{equation} \label{because}
\lambda(\omega,x)< 0.
\end{equation}
It remains to see the existence of a uniform bound as claimed in Theorem \ref{reformulation}. 
To this end, we apply Proposition \ref{firstestimative}
to the map $-\lambda$ to obtain that
for every $x_{0}\in M$, for $\mathbb{P}$-almost every $\omega$, we have
\begin{equation}\label{777}
\lambda(\omega,x_{0})\leq C \inf_{\mu\in A_{\omega,x_{0}}} \int \int \lambda(\vartheta,x)\,d\mathbb{P}(\vartheta)d\mu(x).
\end{equation}
We now use the fact that $\lambda$ is upper semi-continuous on the second variable to conclude that the map $\bar \lambda\colon S^{1}\to \mathbb{R}$ defined by 
$$
\bar\lambda(x)= \int \lambda(\omega,x)\, d\mathbb{P}(\omega)
$$ is also upper semi-continuous. Because of \eqref{because}, we have that $
\bar\lambda(x)<0
$
for every $x\in S^{1}$.
  Since any upper semi-continuous map on a compact metric space has a
maximum value, we conclude that there is $\lambda_{0}<0$ such that 
$$
\bar\lambda(x)\leq \lambda_{0}
$$ 
for every $x\in S^{1}$. 
Therefore, because $C\leq 1$, it follows from \eqref{777}
 that for every $x_{0}$, for $\mathbb{P}$-almost every $\omega$, we have
 $$
 \lambda(\omega,x_{0})\leq  \inf_{\mu\in A_{\omega,x_{0}}} \int \bar \lambda(x)d\mu(x)\leq \lambda_{0},
 $$
 which is the desired result.
\end{proof}
\subsection{Proof of Theorem \ref{teocontraction}}
 Theorem \ref{teocontraction} follows directly from the definition of limit superior  and Theorem \ref{reformulation}.

\hfill \qed

\section{Related and future work}
A natural question would be  whether we can obtain generalizations of  Theorem \ref{teocontraction} for non-Markovian random iterations of homeomorphisms of  $S^{1}$.
This question leads us to a general question on ergodic theory. Indeed, recall that
we have proved  Theorem \ref{teocontraction} using a generalization of the Breiman ergodic theorem \cite{Breiman} for Markov chains obtained by Furstenberg in \cite[Lemma 7.1]{Fur}. This result is a kind of Krylov-Bogolyubov theorem for Markov chains and stationary measures.   Thus, a good start to generalize Theorem \ref{teocontraction} would be  to obtain a version for skew products of this Breiman-Furstenberg theorem.

To be more precise, we need some definitions. Let $(\Omega,\mathscr{F},\mathbb{P},\theta)$ be a measure preserving dynamical system. Consider a measurable space $M$ and let $f\colon \Omega\times M\to M$ be a measurable map.  Denote by $f_{\omega}$ the map defined by $f_{\omega}(x)=f(\omega,x)$.  Then, the map $\varphi\colon \mathbb{N}\times \Omega\times M \to M$ defined by 
$$
\varphi(n,\omega,x)=f_{\theta^{n-1}(\omega)}\circ \dots\circ  f_{\omega}(x)
$$
is called a \emph{random iteration of maps}. A probability measure $\mu$ on $\Omega\times M$
is called $\varphi$-\emph{invariant} if $\mu$ has first marginal $\mathbb{P}$ and is invariant 
by the skew product map $F$ defined by 
$$
F(\omega,x)=(\theta(\omega),f_{\omega}(x)).
$$ 
The \emph{past} of a random iteration is the $\sigma$-algebra defined by
$$
\mathscr{F}^{-}\eqdef\sigma(\omega\mapsto f_{\theta^{-n}(\omega)}^{n}(x)\colon n\geq 1 , x\in X),
$$ 
that is, $\mathscr{F}^{-}$ is the smallest $\sigma$-algebra that makes
all maps in the above family measurable, see \cite{Crauel} for details. We will say that  the disintegration $\{\mu_{\omega}\colon \omega\in \Omega\}$ of a $\varphi$-invariant measure $\mu$ with respect to $\mathbb{P}$  depends only on the past if for every measurable set $A$ the map $\omega\mapsto \mu_{\omega}(A)$ is $\mathscr{F}^{-}$-measurable.

We recall that if $\Omega$ is a product space $E^{\mathbb{Z}}$, $\theta$ is the shift map and 
$\mathbb{P}$ is a product measure $\nu^{\mathbb{Z}}$, then there is a one-to-one correspondence between the set of stationary measures and the set of $\varphi$-invariant measures whose disintegration  depends only on the past, see Arnold \cite[Theorem 2.1.8]{Arnold}. 
Note that Theorem \ref{one} combined with \cite[Theorem 1.7.2]{Arnold} implies that there is 
such a correspondence also in the Markovian case.
 
 Thus, when we can not consider stationary measures, it seems natural to take the set of $\varphi$-invariant measures whose disintegration depends only on the past to generalize results on which stationary measures play a role.
  Having these remarks in mind, we ask the following:
 
\begin{question*}
For every $x$, for $\mathbb{P}$-almost every $\omega$,  is every accumulation point of 
$$
\frac{1}{n}\sum_{i=0}^{n-1}\delta_{F^{i}(\omega,x)}
$$
 a $\varphi$-invariant measure  whose disintegration depends only on the past?
\end{question*}

\bibliographystyle{acm}
\bibliography{references}

\begin{thebibliography}{10}

\bibitem{VitorAraujo}
{\sc Alves, J.~F., and Ara\'{u}jo, V.}
\newblock Random perturbations of nonuniformly expanding maps.
\newblock {\em Ast\'{e}risque}, 286 (2003), xvii, 25--62.
\newblock Geometric methods in dynamics. I.

\bibitem{Antonov}
{\sc Antonov, V.~A.}
\newblock Modelling of processes of cyclic evolution type. {S}ynchronization by
  a random signal.
\newblock {\em Vestnik Leningrad. Univ. Mat. Mekh. Astronom.}, vyp. 2 (1984),
  67--76.

\bibitem{Araujo}
{\sc Ara\'{u}jo, V.}
\newblock Attractors and time averages for random maps.
\newblock {\em Ann. Inst. H. Poincar\'{e} Anal. Non Lin\'{e}aire 17}, 3 (2000),
  307--369.

\bibitem{Hale}
{\sc Ara\'{u}jo, V., and Ayta\c{c}, H.}
\newblock Decay of correlations and laws of rare events for transitive random
  maps.
\newblock {\em Nonlinearity 30}, 5 (2017), 1834--1852.

\bibitem{Arnold}
{\sc Arnold, L.}
\newblock {\em Random dynamical systems}.
\newblock Springer Monographs in Mathematics. Springer-Verlag, Berlin, 1998.

\bibitem{Recurrent}
{\sc Barnsley, M.~F., Elton, J.~H., and Hardin, D.~P.}
\newblock Recurrent iterated function systems.
\newblock {\em Constr. Approx. 5}, 1 (1989), 3--31.

\bibitem{Bocker}
{\sc Bocker-Neto, C., and Viana, M.}
\newblock Continuity of {L}yapunov exponents for random two-dimensional
  matrices.
\newblock {\em Ergodic Theory Dynam. Systems 37}, 5 (2017), 1413--1442.

\bibitem{Breiman}
{\sc Breiman, L.}
\newblock The strong law of large numbers for a class of {M}arkov chains.
\newblock {\em Ann. Math. Statist. 31\/} (1960), 801--803.

\bibitem{Crauel}
{\sc Crauel, H.}
\newblock {\em Random probability measures on {P}olish spaces}, vol.~11 of {\em
  Stochastics Monographs}.
\newblock Taylor \& Francis, London, 2002.

\bibitem{DiazMatias}
{\sc D\'iaz, L.~J., and Matias, E.}
\newblock Stability of the markov operator and synchronization of markovian
  random products.
\newblock {\em Nonlinearity 31}, 5 (2018), 1782--1806.

\bibitem{Edalat}
{\sc Edalat, A.}
\newblock Power domains and iterated function systems.
\newblock {\em Inform. and Comput. 124}, 2 (1996), 182--197.

\bibitem{Fur}
{\sc Furstenberg, H.}
\newblock Noncommuting random products.
\newblock {\em Trans. Amer. Math. Soc. 108\/} (1963), 377--428.

\bibitem{Kakutani}
{\sc Kakutani, S.}
\newblock Random ergodic theorems and markov processes with a stable
  distribution.
\newblock {\em Proc. 2nd Berkeley Symp.\/} (1951), 247--261.

\bibitem{Kifer}
{\sc Kifer, Y.}
\newblock {\em Ergodic theory of random transformations}, vol.~10 of {\em
  Progress in Probability and Statistics}.
\newblock Birkh\"auser Boston, Inc., Boston, MA, 1986.

\bibitem{Volk}
{\sc Kleptsyn, V., and Volk, D.}
\newblock Physical measures for nonlinear random walks on interval.
\newblock {\em Mosc. Math. J. 14}, 2 (2014), 339--365, 428.

\bibitem{KlepAntonov}
{\sc Kleptsyn, V.~A., and Nal'skii, M.~B.}
\newblock Convergence of orbits in random dynamical systems on a circle.
\newblock {\em Funktsional. Anal. i Prilozhen. 38}, 4 (2004), 36--54, 95--96.

\bibitem{Led}
{\sc Ledrappier, F.}
\newblock Positivity of the exponent for stationary sequences of matrices.
\newblock In {\em In Lyapunov exponents (Bremen, 1984)}, vol.~1186 of {\em
  Lect. Notes Math.} Springer, 1986, pp.~565--73.

\bibitem{Malheiro}
{\sc Malheiro, E.~C., and Viana, M.}
\newblock Lyapunov exponents of linear cocycles over {M}arkov shifts.
\newblock {\em Stoch. Dyn. 15}, 3 (2015), 1550020, 27.

\bibitem{Malicet}
{\sc Malicet, D.}
\newblock Random {W}alks on {${\rm Homeo}(S^1)$}.
\newblock {\em Comm. Math. Phys. 356}, 3 (2017), 1083--1116.

\bibitem{Rousseau}
{\sc Marie, P., and Rousseau, J.}
\newblock Recurrence for random dynamical systems.
\newblock {\em Discrete Contin. Dyn. Syst. 30}, 1 (2011), 1--16.

\bibitem{Ohno}
{\sc Ohno, T.}
\newblock Asymptotic behaviours of dynamical systems with random parameters.
\newblock {\em Publ. R.I.M.S. Kyoto Univ.}, 19 (1983), 83--98.

\bibitem{Revuz}
{\sc Revuz, D.}
\newblock {\em Markov chains}, second~ed., vol.~11 of {\em North-Holland
  Mathematical Library}.
\newblock North-Holland Publishing Co., Amsterdam, 1984.

\bibitem{Rolin}
{\sc Rokhlin, V.~A.}
\newblock On the fundamental ideas of measure theory.
\newblock {\em Amer. Math. Soc. Translation 1952}, 71 (1952), 55.

\bibitem{Annacentral}
{\sc Szarek, T., and Zdunik, A.}
\newblock The central limit theorem for function systems on the circle.
\newblock {\em preprint: arXiv:1703.10465\/}.

\bibitem{vianalivro}
{\sc Viana, M.}
\newblock {\em Lectures on {L}yapunov exponents}, vol.~145 of {\em Cambridge
  Studies in Advanced Mathematics}.
\newblock Cambridge University Press, Cambridge, 2014.

\end{thebibliography}

\end{document}